\newif\ifelsevier
\IfFileExists{elsarticle.cls}{\elseviertrue}{\elsevierfalse}

\ifelsevier
  \documentclass[3p]{elsarticle}
  \biboptions{numbers,sort&compress}
\else
  \documentclass[11pt,a4paper]{article}
  \usepackage[margin=2.5cm]{geometry}
  \usepackage[numbers,sort&compress]{natbib}
\fi

\usepackage[T1]{fontenc}
\usepackage[utf8]{inputenc}
\usepackage{amsmath,amssymb,amsthm}
\usepackage{booktabs}
\usepackage{multirow}
\usepackage{graphicx}
\usepackage[hidelinks]{hyperref}

\usepackage{float}
\floatstyle{ruled}
\newfloat{algo}{htbp}{loa}
\floatname{algo}{Algorithm}
\DeclareMathOperator{\trace}{tr}
\newcommand{\KwIn}[1]{\noindent\textbf{Input:} #1\par}
\newcommand{\KwOut}[1]{\noindent\textbf{Output:} #1\par\vspace{3pt}}

\theoremstyle{plain}
\newtheorem{theorem}{Theorem}
\newtheorem{proposition}[theorem]{Proposition}
\newtheorem{corollary}[theorem]{Corollary}
\newtheorem{lemma}[theorem]{Lemma}
\theoremstyle{definition}
\newtheorem{definition}[theorem]{Definition}
\newtheorem{example}{Example}
\theoremstyle{remark}
\newtheorem{remark}[theorem]{Remark}

\newcommand{\pinv}[1]{#1^{\dagger}}
\newcommand{\R}{\mathbb{R}}
\newcommand{\C}{\mathbb{C}}
\newcommand{\Q}{\mathbb{Q}}
\newcommand{\Rat}{\R(u_1,\dots,u_\ell)}
\newcommand{\Kat}{K(u_1,\dots,u_\ell)}
\newcommand{\rank}{\operatorname{rank}}
\newcommand{\Ima}{\operatorname{Im}}
\newcommand{\Ker}{\operatorname{Ker}}
\newcommand{\num}{\operatorname{num}}
\newcommand{\den}{\operatorname{den}}

\newcommand{\titletext}{Effective computation of Moore--Penrose inverses over
fields of rational functions by specializations}

\newcommand{\abstracttext}{%
In this paper we consider matrices whose entries are rational functions of several
parameters over a Moore--Penrose field, that is, a field with an involutory
automorphism over which every matrix has Moore--Penrose inverse. 
We prove that over any such field the Penrose conditions can be solved with
linear algebra alone, even though they form a polynomial system of degree two. We
then
bound the degrees of the numerator and of the denominator of the pseudoinverse in
terms of the degree of the entries and of the rank, and not of the dimensions of the
matrix. 
Specialization is the thread: the closed form is computed once over the field of
rational functions, and the question is at which parameter values it still
returns the pseudoinverse of the specialized matrix. We determine the values
where it does not, from the matrix alone and before computing the pseudoinverse.
They are the zeros of a polynomial built from the maximal minors, and they are
the values at which the rank decreases. This turns previous sufficient conditions into an exact
characterization.
The theory also gives symbolic algorithms for the real, the complex and the
parametric case, which we implement in Maple and test on $972$ timed
replications. We also apply them to a Leontief economic model, where the excluded
value is the point at which the economy stops being viable.}

\begin{document}

\ifelsevier
\begin{frontmatter}

\title{\titletext}

\author[cunef]{J. Sendra\corref{cor1}}
\ead{juana.sendra@cunef.edu}
\cortext[cor1]{Corresponding author.}

\address[cunef]{Dept. de Matem\'aticas, CUNEF Universidad, C. de Almansa, 101, 28040, Madrid, Spain}

\begin{abstract}
\abstracttext
\end{abstract}

\begin{keyword}
Moore--Penrose inverse \sep symbolic computation \sep rational function field
\sep anisotropic hermitian form \sep degree bound \sep specialization \sep
Leontief input--output model
\MSC[2020] 15A09 \sep 65F20 \sep 15A15 \sep 68W30 \sep 65Y20 \sep 91B66
\end{keyword}

\end{frontmatter}
\else
\title{\titletext}
\author{J. Sendra\\ CUNEF Universidad, Madrid, Spain\\
\texttt{juana.sendra@cunef.edu}}
\date{\today}
\maketitle
\begin{abstract}
\abstracttext
\end{abstract}
\fi

\section{Introduction}
\label{sec:intro}

In many structural models the coefficients of a linear system are estimated and not
fixed. If we treat the uncertain entries as parameters $u_{1},\dots,u_{\ell}$ and
compute over $\Q(u_{1},\dots,u_{\ell})$, we obtain a closed form that can be evaluated
at particular values of the parameters, differentiated exactly, and examined for
degeneracy. When the system is rectangular or rank deficient, the object we need is the
Moore--Penrose inverse and not the ordinary inverse
\cite{BenIsraelGreville2003,Penrose1955}.

This setting raises three difficulties, and they are of different kinds.
The first is algebraic. The four conditions that define the pseudoinverse are a
polynomial system in the unknown entries. Three of them are linear, but the fourth is
quadratic, so the problem is not one of linear algebra and elimination methods have
been used to solve it.

The second is a matter of cost. Over a field of rational functions each arithmetic
operation requires a common denominator, a greatest common divisor and a
normalization, so the price of one operation is not constant: it grows with the degrees
of the operands. Any cost model therefore needs a bound on the size of the result.

The third concerns the use of the closed form. Once we have the pseudoinverse as a
matrix of rational functions, we want to evaluate it at particular values of the
parameters. This is not always legitimate: at some values the closed form no longer
represents the pseudoinverse of the evaluated matrix. Those values must be known, and
preferably before the pseudoinverse has been computed.

\subsection{Background}

The pseudoinverse was introduced by Penrose \cite{Penrose1955} and its theory is
collected in \cite{BenIsraelGreville2003}. Over fields other than the real and the
complex numbers, existence is not automatic. Gabriel \cite{Gabriel1969} studied the
inverse over an arbitrary field, Bhaskara Rao \cite{BhaskaraRao1983} and Bapat,
Bhaskara Rao and Prasad \cite{BapatBhaskaraPrasad1990} characterized existence over
integral domains by a condition on the maximal minors, and Sendra and Sendra
\cite{SendraSendra2017} introduced the Moore--Penrose fields, those over which every
matrix has a pseudoinverse, and proved that the property is inherited by the field of
rational functions. A determinantal representation of the pseudoinverse in terms of
the maximal minors goes back to Bruening \cite{Bruening1987} for the full rank case
and to \cite{BapatBhaskaraPrasad1990} over integral domains; over the real numbers the
same object appears as the squared volume of the matrix in the sense of Ben-Israel
\cite{BenIsrael1992}.

For matrices with polynomial or rational function entries, the Leverrier--Faddeev
algorithm was extended by Karampetakis \cite{KarampetakisLAA1997}, and evaluation and
interpolation techniques were developed by Vologiannidis and Karampetakis
\cite{VologiannidisKarampetakis2004} and by Stanimirovi\'c and Petkovi\'c
\cite{StanimirovicPetkovic2006}; the algebraic structure of the result is studied in
\cite{KafetzisKarampetakis2022}. Degree bounds appear in that same line of work,
stated per variable. In the symbolic setting over general fields, Sendra and Sendra
compute Drazin inverses with rational function entries using Gr\"obner bases and
specialization \cite{SendraSendra2015,SendraSendra2016}, and
\cite[\S4]{SendraSendra2017} treats the Penrose equations as a polynomial system of
degree two; this last work is our starting point.

The geometry of the solution sets of the Penrose conditions over a general field was
studied by Stanimirovi\'c et al.\ \cite{StanimirovicLMA2021}, who determine the
dimension of the set of inner inverses and describe the sets of outer inverses as
affine algebraic varieties; the dimension of the set of inner inverses that we use in
Section~\ref{sec:theory} is theirs. In \cite{Stanimirovic2021} the same authors
represent outer inverses with prescribed range and null space in terms of inner
inverses, obtain them as solutions of linear matrix equations, and give sufficient
conditions for those representations to specialize properly; the exact
characterization of Section~\ref{sec:degrees} sharpens conditions of that kind.
The affine geometry of the sets of outer inverses is explored computationally in
\cite{SendraMT2026}. Symbolic computation of generalized inverses with computer
algebra systems, and an application to dynamical systems, is treated in
\cite{StanimirovicCAI2019}.

\subsection{Contributions}

We address the three difficulties in that order. First, we show that the whole
system can be solved with linear algebra. The three linear conditions define an
affine subspace, and on that subspace the residual of the fourth one is the
displacement from its unique solution, so the fourth condition is affine-linear
there and two linear solves suffice. The identity behind this is classical over
$\C$; what we prove is its residual form, and over an arbitrary Moore--Penrose
field. It is the residual form, and not the identity, that gives the size of the
second system and makes the elimination linear, and in the symbolic computation
literature these systems have been solved with Gr\"obner bases instead.
Second, we bound the degrees of the numerator and of the denominator
of the pseudoinverse in terms of the degree of the entries and of the rank alone, and
not of the dimensions of the matrix. Third, and this is our main result, we determine the parameter values that have to
be excluded when the closed form is specialized, that is, the values at which it
stops representing the pseudoinverse of the specialized matrix. They are the zeros
of a single polynomial built from the maximal minors, and they are also the values
at which the rank decreases. So we obtain that set from the matrix itself, before
computing the pseudoinverse. We implement the method in Maple and evaluate it on
$972$ timed replications.

Section~\ref{sec:theory} develops the algebraic reduction,
Section~\ref{sec:degrees} the degree and specialization results,
Sections~\ref{sec:algorithms}--\ref{sec:cost} the algorithms and the experiments, and
Section~\ref{sec:application} an application to a parametric Leontief model.

\section{The Penrose system over a parametric field}
\label{sec:theory}

\subsection{Notation}
\label{ssec:notation}

Throughout, $(K,\varphi)$ denotes a field together with an involutory automorphism
$\varphi$, that is, one with $\varphi\circ\varphi=\mathrm{id}$, and for a matrix $A$
over $K$ we write $A^{*}$ for the transpose of $\varphi(A)$. The two familiar cases
are $K=\C$ with complex conjugation, where $A^{*}$ is the conjugate transpose, and a
field with the identity involution, such as $\R$, where $A^{*}=A^{\mathsf T}$.
Section~\ref{ssec:mpfields} singles out the pairs $(K,\varphi)$ over which the
pseudoinverse always exists.

With $(K,\varphi)$ fixed, $A\in K^{m\times n}$ has $\rank(A)=r$, and $X=(x_{ij})$
denotes a matrix of $nm$ indeterminates over $K$. The symbol $\ell$ always denotes
the number of parameters, so that the corresponding parametric field is
$K(u_{1},\dots,u_{\ell})$, whereas $r$ is reserved for the rank; arbitrary lengths of
vectors and index sets are denoted by $s$. A Moore--Penrose inverse of $A$ is a
matrix $X$ satisfying
\begin{align}
  AXA &= A, \tag{P1}\label{eq:P1}\\
  XAX &= X, \tag{P2}\label{eq:P2}\\
  (AX)^{*} &= AX, \tag{P3}\label{eq:P3}\\
  (XA)^{*} &= XA. \tag{P4}\label{eq:P4}
\end{align}
Whenever it exists it is unique \cite{Penrose1955}, and we denote it by $\pinv{A}$.
We use the standard notation $A\{i,j,\dots\}$ for the set of matrices satisfying the
corresponding subset of the Penrose conditions, so that $\pinv{A}$ is the unique
element of $A\{1,2,3,4\}$. We regard \eqref{eq:P1}--\eqref{eq:P4} as a system of
polynomial equations in $K[x_{11},\dots,x_{nm}]$. For index sets $I$ and $J$ we
write $A[I,J]$ for the submatrix of $A$ on the rows $I$ and the columns $J$, and
we abbreviate it as $A[I,:]$ or $A[:,J]$ when all the columns, respectively all
the rows, are taken.

For the parametric setting of Section~\ref{sec:degrees} onwards, write
$P=K[u_{1},\dots,u_{\ell}]$, so that $\Kat$ is its field of fractions. The
\emph{numerator} and \emph{denominator} of a nonzero $f\in\Kat$ are the coprime
polynomials $p,q\in P$ with $f=p/q$, uniquely determined up to multiplication by a
common nonzero unit, and for a matrix $A=(a_{ij})\in\Kat^{m\times n}$ we define
\[
  \den(A)=\operatorname{lcm}\bigl\{\den(a_{ij})\bigr\},
  \qquad
  \num(A)=\den(A)\,A .
\]
So $\den(A)\in P$ is defined up to multiplication by a nonzero unit, and
$\num(A)$ is a matrix over $P$. Throughout, \emph{degree} refers only to polynomials and never to rational
functions. For $p\in P$ it is the
total degree, and for a matrix over $P$ it is the maximum total degree of its
entries. From
Section~\ref{sec:degrees} onwards $\delta$ is reserved for an upper bound on the
degrees of the entries of a polynomial matrix $A$.

\subsection{Moore--Penrose fields}
\label{ssec:mpfields}

The results in this subsection are taken from Sendra and Sendra
\cite{SendraSendra2017} and are recalled here in the form needed later.

The pair $(K,\varphi)$ of Section~\ref{ssec:notation} is called a
\emph{Moore--Penrose field} if $\pinv{A}$ exists for every $A\in K^{m\times n}$.

\begin{theorem}[{\cite[Thms.~1, 4 and 12]{SendraSendra2017}}]
\label{thm:mpfield}
The Moore--Penrose inverse $\pinv{A}$ exists if and only if
\[
  \rank(AA^{*})=\rank(A)=\rank(A^{*}A).
\]
Moreover, $(K,\varphi)$ is a Moore--Penrose field if and only if, for every
$s\ge1$,
\[
  \sum_{k=1}^{s}a_{k}\varphi(a_{k})=0,\qquad a_{k}\in K,
  \qquad\text{implies}\qquad a_{1}=\dots=a_{s}=0 .
\]
If $(K,\varphi)$ is a Moore--Penrose field, then so is
$(K(u_{1},\dots,u_{\ell}),\varphi_{e})$, where $\varphi_{e}$ denotes the natural
extension of $\varphi$ to the rational-function field.
\end{theorem}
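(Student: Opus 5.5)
We prove the three assertions of Theorem~\ref{thm:mpfield} in turn. The first is the core algebraic fact and the others follow from it.

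\emph{Existence criterion.} For necessity, suppose $X=\pinv{A}$ exists. From \eqref{eq:P1} and \eqref{eq:P3},
\[
  A=AXA=(AX)^{*}A=X^{*}A^{*}A ,
\]
so $\rank(A)\le\rank(A^{*}A)\le\rank(A)$. Symmetrically, $A=AXA=A(XA)^{*}=AA^{*}X^{*}$ gives $\rank(AA^{*})=\rank(A)$.

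For sufficiency, assume both rank equalities. Since $\Ker A\subseteq\Ker A^{*}A$ and the two kernels have equal dimension, they coincide. Hence the row space of $A^{*}A$ equals the row space of $A$, so $A=UA^{*}A$ for some $U$. Likewise $A=AA^{*}V$ for some $V$. We then set
\[
  X=A^{*}V^{*}\cdots ,
\]
or more cleanly use a full rank factorization $A=FG$ with $F\in K^{m\times r}$ and $G\in K^{r\times n}$. The rank hypotheses make $F^{*}F$ and $GG^{*}$ invertible; for instance $\rank(A^{*}A)=\rank(G^{*}F^{*}FG)=r$ forces $\rank(F^{*}F)=r$. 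We then define
\[
  X=G^{*}(GG^{*})^{-1}(F^{*}F)^{-1}F^{*}
\]
and verify \eqref{eq:P1}--\eqref{eq:P4} directly. The computation uses $\varphi(\varphi(x))=x$ and $(BC)^{*}=C^{*}B^{*}$, which hold because $\varphi$ is an involutory automorphism. This is the cleanest route, and invertibility of $F^{*}F$ is exactly where the hypothesis enters.

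\emph{Characterization of Moore--Penrose fields.} Assume anisotropy, i.e.\ $\sum a_k\varphi(a_k)=0$ implies all $a_k=0$. If $A^{*}Ax=0$, then $x^{*}A^{*}Ax=(Ax)^{*}(Ax)=0$. This is a sum $\sum b_k\varphi(b_k)=0$ with $b_k=\varphi((Ax)_k)$ or the conjugate arrangement, so $Ax=0$. Hence $\Ker A^{*}A=\Ker A$, and similarly for $AA^{*}$, so the ranks agree and $\pinv{A}$ exists.

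Conversely, suppose a nonzero $(a_1,\dots,a_s)$ satisfies $\sum a_k\varphi(a_k)=0$. Take the row matrix $A=(a_1,\dots,a_s)\in K^{1\times s}$. Then $AA^{*}=0$ while $\rank A=1$, so $\pinv{A}$ fails to exist by the criterion above.

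\emph{Inheritance by $\Kat$.} By the characterization, it suffices to show that $\varphi_e$ is anisotropic on $\Kat$. Given $\sum f_k\varphi_e(f_k)=0$, we clear a common denominator $q$. Multiplying by $q\varphi_e(q)$ reduces the problem to polynomials $p_k\in P$ with $\sum p_k\varphi_e(p_k)=0$.

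The main obstacle is showing that some $p_k$ nonzero is impossible; this needs a leading-term argument. Fix a monomial order, e.g.\ graded lexicographic, and let $\mu$ be the largest leading monomial among the nonzero $p_k$. Since $\varphi_e$ acts only on coefficients, the coefficient of $\mu^2$ in $\sum p_k\varphi_e(p_k)$ is $\sum_{k:\,\mathrm{lm}(p_k)=\mu}c_k\varphi(c_k)$, where $c_k$ is the leading coefficient of $p_k$. No other products reach $\mu^2$, because $\mathrm{lm}(fg)=\mathrm{lm}(f)\,\mathrm{lm}(g)$ and any other pair contributes strictly smaller monomials. Anisotropy of $\varphi$ on $K$ forces these $c_k$ to be $0$, contradicting their choice as leading coefficients. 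Thus all $p_k=0$, hence all $f_k=0$, and $(\Kat,\varphi_e)$ is a Moore--Penrose field.
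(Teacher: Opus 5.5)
Your proof is correct. The paper gives no proof of its own to compare it with: Theorem~\ref{thm:mpfield} is quoted from \cite[Thms.~1, 4 and 12]{SendraSendra2017} and used as a black box.

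Two of your ingredients do reappear later in the paper. The matrix $G^{*}(GG^{*})^{-1}(F^{*}F)^{-1}F^{*}$, built from a full-rank factorization, is the one verified in the proof of Proposition~\ref{prop:closedform}. Your isotropic row matrix is the one used in Example~\ref{ex:isotropic} and in the remark after that proposition.

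Where you differ is in how anisotropy makes $F^{*}F$ invertible. The paper uses Cauchy--Binet, $\det(F^{*}F)=S(F)$, and applies anisotropy to the vector of maximal minors of $F$. You apply it to the single vector $Ax$, get $\Ker(A^{*}A)=\Ker(A)$, and then use the rank criterion. Your route is more elementary and needs no determinants. The paper's route produces the intrinsic denominator $S(A)$, on which Theorem~\ref{thm:bound} and Corollary~\ref{thm:excep} rest.

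For the passage to $\Kat$ the paper has nothing of its own, and your leading-monomial argument is complete and purely algebraic. Since $\varphi$ is injective, $\varphi_{e}(p_k)$ has the same support as $p_k$ and leading coefficient $\varphi(c_k)$. Compatibility of the monomial order with multiplication ($\nu<\mu$ implies $\nu\beta<\mu\beta$) then shows that $\mu^{2}$ is reached only by the product of the two leading terms of those $p_k$ with $\mathrm{lm}(p_k)=\mu$. This settles both the cross terms and the summands with smaller leading monomial. The argument uses no specialization and does not need $K_{\varphi}$ to be infinite. The paper remarks that a computable superset of $\mathcal{E}(A)$ can be extracted from the proof of \cite[Thm.~12]{SendraSendra2017}. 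That suggests the original argument carries matrix-level information that yours, working only with the form, does not produce, but the statement itself does not need it.

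Three small clean-ups are worth making. First, delete the false start $X=A^{*}V^{*}\cdots$ and the sentence about $U$ and $V$ that leads to it, since neither is used. Second, in the characterization, write out $\rank(AA^{*})=\rank(A^{*})=\rank(A)$; the last step holds because $\varphi$ sends nonzero minors to nonzero minors. Third, treat $A=0$ separately: there the factorization is degenerate and $\pinv{A}=0$.
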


\begin{corollary}
\label{cor:Ruexists}
$(\R,\mathrm{id})$ is a Moore--Penrose field, and therefore so is
$(\Rat,\mathrm{id})$. Hence every $A\in\Rat^{m\times n}$ has a Moore--Penrose
inverse, and $\pinv{A}\in\Rat^{n\times m}$: the pseudoinverse of a parametric matrix
is again a matrix of rational functions in the same parameters.
\end{corollary}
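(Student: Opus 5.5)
The corollary follows from Theorem~\ref{thm:mpfield} in three moves: check the anisotropy criterion for $(\R,\mathrm{id})$, lift the property to $\Rat$ by the extension part of the theorem, and read off existence of $\pinv{A}$ over $\Rat$ from the definition of a Moore--Penrose field. The plan has no real obstacle; the only point needing care is the involution on the rational-function field.

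\textbf{Step 1: $(\R,\mathrm{id})$ is a Moore--Penrose field.} We use the characterization in Theorem~\ref{thm:mpfield}. With $\varphi=\mathrm{id}$ the condition reads $\sum_{k=1}^{s}a_k^2=0\Rightarrow a_1=\dots=a_s=0$. This holds in $\R$ because each square is nonnegative. A sum of nonnegative reals vanishes only when every summand vanishes, so each $a_k^2=0$ and hence $a_k=0$. This is exactly the ``only if'' direction needed, for every $s\ge1$.

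\textbf{Step 2: the property lifts to $\Rat$.} Apply the last assertion of Theorem~\ref{thm:mpfield} to $(K,\varphi)=(\R,\mathrm{id})$. It says that $(\Rat,\varphi_e)$ is a Moore--Penrose field. It remains to see that $\varphi_e=\mathrm{id}$. The natural extension acts on coefficients and fixes the indeterminates $u_i$, so it fixes every polynomial with real coefficients. It therefore fixes every quotient of such polynomials, that is, every element of $\Rat$.

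\textbf{Step 3: existence and location of $\pinv{A}$.} By the definition of a Moore--Penrose field, every $A\in\Rat^{m\times n}$ has a Moore--Penrose inverse with respect to $\mathrm{id}$, with $A^{*}=A^{\mathsf T}$. By definition this inverse is a matrix over the same field, so $\pinv{A}\in\Rat^{n\times m}$. Uniqueness comes from \cite{Penrose1955}, as recalled in Section~\ref{ssec:notation}.

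For Step~2 one could alternatively verify anisotropy directly, avoiding the cited extension theorem. Suppose $\sum f_k^2=0$ with $f_k\in\Rat$, and clear denominators to get polynomials $p_k$ with $\sum p_k^2=0$. Evaluate at any point of $\R^\ell$ and apply Step~1 pointwise: every $p_k$ vanishes on all of $\R^\ell$. Since $\R$ is infinite, a polynomial vanishing on $\R^\ell$ is the zero polynomial, so every $p_k=0$ and hence every $f_k=0$.
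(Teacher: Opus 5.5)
Your proof is correct and follows the paper's argument: the sum-of-squares check of the criterion of Theorem~\ref{thm:mpfield} over $\R$, then the extension clause of that theorem, with your observation that $\varphi_{e}=\mathrm{id}$ making explicit what the paper leaves implicit. Your alternative direct check of anisotropy over $\Rat$, by clearing denominators and evaluating at real points, is also valid and removes the dependence on the cited extension theorem.
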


This follows immediately from Theorem~\ref{thm:mpfield} and is recorded explicitly
because it is the setting used throughout the remainder of the paper. For the
identity involution the criterion becomes $\sum_{k}a_{k}^{2}=0\Rightarrow a_{k}=0$
for all $k$, which holds over $\R$, while the extension to $\Rat$ follows from the
last statement of the theorem.

The hypothesis is restrictive, so the parametric setting is not merely a formal
generalization. Characteristic zero is necessary \cite[Cor.~5]{SendraSendra2017} but
not sufficient: $\Q(\sqrt2)$ with $a+b\sqrt2\mapsto a-b\sqrt2$ has characteristic
zero and is not a Moore--Penrose field \cite[Ex.~6]{SendraSendra2017}.

We write $K_{\varphi}=\{a\in K:\varphi(a)=a\}$ for the subfield of the elements
that $\varphi$ fixes, which is $\R$ for $(\C,\overline{\phantom{a}})$ and all of
$K$ for the identity involution. The parameters can only take values in
$K_{\varphi}$, because only then does evaluation commute with the involution, and
that is what the next theorem requires.

\begin{theorem}[specialization; {\cite[Thms.~13 and 15]{SendraSendra2017}}]
\label{thm:specialization}
Let $A\in K(u_{1},\dots,u_{\ell})^{m\times n}$ be such that $\pinv{A}$ exists, and
let $\alpha\in K_{\varphi}^{\,\ell}$ satisfy
\begin{equation}
  \label{eq:speccond}
  \operatorname{den}(A)(\alpha)\cdot\operatorname{den}(\pinv{A})(\alpha)\neq 0 .
\end{equation}
Then $\pinv{\bigl(A|_{u=\alpha}\bigr)}$ exists and equals
$\bigl(\pinv{A}\bigr)\big|_{u=\alpha}$.
\end{theorem}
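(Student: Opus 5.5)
The plan is to evaluate the four Penrose identities entrywise at $\alpha$, using that they hold over $\Kat$. Write $B=\pinv{A}$. Condition \eqref{eq:speccond} says that $\den(A)(\alpha)\neq0$ and $\den(B)(\alpha)\neq0$. Hence every entry of $A$ and of $B$ lies in the local ring
\[
  P_{\alpha}=\{p/q : p,q\in P,\ q(\alpha)\neq0\}.
\]
Evaluation at $\alpha$ is then a well-defined ring homomorphism $\mathrm{ev}_\alpha\colon P_\alpha\to K$. This uses that the numerator and denominator are coprime, so any representation with nonvanishing denominator at $\alpha$ gives the same value.

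Since $\mathrm{ev}_\alpha$ is a ring homomorphism and $P_\alpha$ is closed under sums and products, all entries of $AB$, $BA$, $ABA$ and $BAB$ lie in $P_\alpha$. Moreover, evaluation commutes with matrix multiplication: $(AB)|_{u=\alpha}=A|_{u=\alpha}\,B|_{u=\alpha}$, and similarly for the triple products. Therefore \eqref{eq:P1} and \eqref{eq:P2} for $(A,B)$ over $\Kat$ give, after applying $\mathrm{ev}_\alpha$ entrywise, \eqref{eq:P1} and \eqref{eq:P2} for the pair $(A|_{u=\alpha},B|_{u=\alpha})$ over $K$.

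The step that needs care is \eqref{eq:P3} and \eqref{eq:P4}, that is, commuting evaluation with the involution. Here $\varphi_e$ acts on coefficients and fixes each $u_i$. So for $f=p/q\in P_\alpha$ we have $\varphi_e(f)=\varphi(p)/\varphi(q)$, where $\varphi$ is applied to the coefficients. Because $\alpha\in K_\varphi^{\,\ell}$, we get $\varphi(q)(\alpha)=\varphi\bigl(q(\alpha)\bigr)$, which is nonzero since $\varphi$ is injective. Thus $\varphi_e(f)\in P_\alpha$ and
\[
  \mathrm{ev}_\alpha(\varphi_e(f))=\varphi\bigl(\mathrm{ev}_\alpha(f)\bigr).
\]
Transposition clearly commutes with evaluation, so $(M^*)|_{u=\alpha}=(M|_{u=\alpha})^*$ for any matrix $M$ over $P_\alpha$. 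Applying this to $M=AB$ and $M=BA$ transfers \eqref{eq:P3} and \eqref{eq:P4}.

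Consequently $B|_{u=\alpha}$ satisfies all four Penrose conditions for $A|_{u=\alpha}$. By uniqueness of the Moore--Penrose inverse \cite{Penrose1955}, $\pinv{(A|_{u=\alpha})}$ exists and equals $(\pinv{A})|_{u=\alpha}$.

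The main obstacle is exactly this compatibility of $\varphi_e$ with evaluation. It is the only place the hypothesis $\alpha\in K_\varphi^{\,\ell}$ enters, and it must be checked at the level of $P_\alpha$ rather than only on polynomials. The rest is bookkeeping, showing that the products stay inside $P_\alpha$.
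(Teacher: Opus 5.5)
Your proposal is correct and follows the argument the paper relies on. The paper does not reprove Theorem~\ref{thm:specialization}; it cites \cite[Thms.~13 and 15]{SendraSendra2017}. Its commentary after the statement names the same mechanism you use: at a point of $K_{\varphi}^{\,\ell}$ evaluation commutes with $\varphi_{e}$, so specialization takes a solution of \eqref{eq:P1}--\eqref{eq:P4} to a solution, and uniqueness of the Moore--Penrose inverse finishes the proof. One small correction: well-definedness of $\mathrm{ev}_{\alpha}$ on $P_{\alpha}$ does not need coprimality, since $p/q=p'/q'$ gives $pq'=p'q$ and hence equal values whenever $q(\alpha)q'(\alpha)\neq0$.
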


\begin{definition}
\label{def:exceptional}
The \emph{exceptional set} $\mathcal{E}(A)$ is the set of parameter values $\alpha$
for which \eqref{eq:speccond} fails, together with the values lying outside
$K_{\varphi}^{\,\ell}$.
\end{definition}

We emphasize two features of this result. First, apart from
$\alpha\in K_{\varphi}^{\,\ell}$, the only hypothesis is that the two denominators
in \eqref{eq:speccond} do not vanish. We do not assume that the rank is preserved;
that follows as a consequence.

Second, the point $\alpha$ has to lie in $K_{\varphi}^{\,\ell}$, which for
$(\C,\overline{\phantom{a}})$ means real parameter values. This is not a technical
restriction. Conditions \eqref{eq:P3} and \eqref{eq:P4} are defined through the
involution, and $\varphi_{e}$ conjugates coefficients and fixes the variables, so
evaluation at $u=\alpha$ commutes with $\varphi_{e}$ only when
$\varphi(\alpha)=\alpha$. Otherwise specialization can take a solution of the
Penrose system to a matrix that is not one, and \cite[Ex.~14]{SendraSendra2017}
gives a matrix over $\C(x_{1},x_{2},x_{3})$ that specializes correctly at every
real point and fails at $(1,1+i,1)$. Over $\R$ with the identity involution
$K_{\varphi}=\R$ and the restriction is empty.

As defined above, $\mathcal{E}(A)$ cannot in general be determined before $\pinv{A}$
has been computed. A computable superset can be extracted from the proof of
\cite[Thm.~12]{SendraSendra2017}; Corollary~\ref{thm:excep} below identifies the
exceptional set itself at the admissible parameter values, while in the application
of Section~\ref{sec:application} the relevant locus reduces to
$\{\det(I-A(u))=0\}$, which can be computed directly.

\begin{example}[the exceptional set and what happens on it]
\label{ex:excep}
Let the ground field be $\R(u)$ with the identity involution, a Moore--Penrose field
by Corollary~\ref{cor:Ruexists}, so that all real parameter values are admissible.
Consider
\[
  B=\begin{pmatrix}1&1\\1&u\end{pmatrix},
  \qquad \det B=u-1,
  \qquad
  \pinv{B}=B^{-1}=\frac{1}{u-1}\begin{pmatrix}u&-1\\-1&1\end{pmatrix},
\]
the second equality because $\rank B=2$ over $\R(u)$, where $\det B$ is a nonzero
polynomial. Hence $\den(B)=1$ and $\den(\pinv{B})=u-1$, so
$\mathcal{E}(B)=\{1\}$ and Theorem~\ref{thm:specialization} applies at every
$\alpha\neq1$.

At $\alpha=1$ the rank drops to one and
$\pinv{(B|_{u=1})}=\tfrac14\left(\begin{smallmatrix}1&1\\1&1\end{smallmatrix}
\right)$ still exists. It is not, however, obtained by specializing $\pinv{B}$,
since the closed form is undefined at $u=1$. The exceptional set should therefore
not be read as the locus where the Moore--Penrose inverse fails to exist, but as the
locus where the parametric closed form ceases to represent the Moore--Penrose
inverse of the specialized matrix. In Section~\ref{sec:application}, identifying
precisely this locus is part of the problem rather than something to be avoided.
\end{example}

\subsection{Structural reduction and the dimension of $A\{1,3,4\}$}
\label{ssec:reduction}

The four Penrose conditions do not all have the same degree in the unknowns. Take
the involution on $K$ to be the identity, so that $A^{*}=A^{\mathsf T}$. Every
entry of $AXA-A$, of $(AX)^{\mathsf T}-AX$ and of $(XA)^{\mathsf T}-XA$ is a sum
of terms with a single unknown factor, so \eqref{eq:P1}, \eqref{eq:P3} and
\eqref{eq:P4} form an affine linear system. The $(i,j)$ entry of $XAX$, on the
other hand, is $\sum_{k,l}x_{ik}a_{kl}x_{lj}$, and its monomials $x_{ik}x_{lj}$
are pairwise distinct, so the quadratic part survives whenever $A\neq0$. So,
unless $A=0$, only \eqref{eq:P2} is genuinely quadratic.

If the involution is not the identity, the situation changes. The entries of
$(AX)^{*}-AX$ and of $(XA)^{*}-XA$ then involve $\varphi(x_{ij})$ and not $x_{ij}$,
so the maps $X\mapsto(AX)^{*}$ and $X\mapsto(XA)^{*}$ are only
$\varphi$-semilinear: they are additive, but a scalar comes out conjugated,
$\varphi(\lambda)$ instead of $\lambda$. The system is therefore not $K$-linear, and
this is why we treat the complex case separately in
Section~\ref{ssec:complex}, where we replace it by a system that is linear over the
fixed field. Even so, the solution set is an affine $K$-subspace, a coset of a
$K$-subspace, because the argument of Theorem~\ref{prop:dimVL} uses only additivity
and not $K$-linearity.

We write $\mathcal{L}$ for the set of polynomials given by \eqref{eq:P1},
\eqref{eq:P3} and \eqref{eq:P4}, and $V(\mathcal{L})$ for its solution set, so that
$V(\mathcal{L})=A\{1,3,4\}$. Since $\pinv{A}$ satisfies the four conditions, it
belongs to $V(\mathcal{L})$, and it is the only point of $V(\mathcal{L})$ that
satisfies \eqref{eq:P2}. Any treatment of \eqref{eq:P2} therefore involves
$\dim V(\mathcal{L})$ unknowns and not $nm$, and the next theorem computes that
dimension.

The second criterion of Theorem~\ref{thm:mpfield} says precisely that the form
$\langle v,w\rangle=\sum_{k}\varphi(v_{k})w_{k}$ on $K^{s}$ is
\emph{anisotropic}, that is, that it vanishes on no nonzero vector, for every $s$.
This is a property of the field alone, and we use that name from here on.

\begin{lemma}[anisotropy]
\label{lem:anisotropy}
Let $(K,\varphi)$ be a Moore--Penrose field. Then $v^{*}v=0$ implies $v=0$ for
every $v\in K^{s}$ and every $s\ge1$, and consequently, for every
$A\in K^{m\times n}$,
\[
  \Ima(A)\cap\Ima(A)^{\perp}=\{0\}
  \qquad\text{and}\qquad
  \Ker(A)\cap\Ima(A^{*})=\{0\},
\]
where $U^{\perp}=\{y: y^{*}u=0 \text{ for all } u\in U\}$.
\end{lemma}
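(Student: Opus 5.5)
The plan is to read the first claim off the criterion in Theorem~\ref{thm:mpfield} and then derive the two intersection statements from it by short algebraic manipulations. For $v=(v_{1},\dots,v_{s})^{\mathsf T}\in K^{s}$ we have
\[
  v^{*}v=\sum_{k=1}^{s}\varphi(v_{k})v_{k}=\sum_{k=1}^{s}v_{k}\varphi(v_{k}),
\]
using commutativity of $K$. Since $(K,\varphi)$ is a Moore--Penrose field, the second part of Theorem~\ref{thm:mpfield} gives that this sum vanishes only if all $v_{k}=0$, so $v^{*}v=0$ forces $v=0$. This is the anisotropy statement, valid for every $s\ge1$.

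For $\Ima(A)\cap\Ima(A)^{\perp}=\{0\}$, take $y$ in the intersection. Since $y\in\Ima(A)^{\perp}$, we have $y^{*}u=0$ for every $u\in\Ima(A)$. Choosing $u=y$, which is allowed because $y\in\Ima(A)$, gives $y^{*}y=0$, hence $y=0$ by the first part.

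For $\Ker(A)\cap\Ima(A^{*})=\{0\}$, take $x=A^{*}w$ with $Ax=0$. Then
\[
  x^{*}x=(A^{*}w)^{*}x=w^{*}Ax=0,
\]
and anisotropy gives $x=0$. The one point to justify is $(A^{*}w)^{*}=w^{*}A$. It follows from $(MN)^{*}=N^{*}M^{*}$ and $(M^{*})^{*}=M$, and both hold because $\varphi$ is a ring automorphism with $\varphi\circ\varphi=\mathrm{id}$ that commutes with transposition entrywise.

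None of these steps is a real obstacle. The only places needing care are the identification of $v^{*}v$ with the sum in the field criterion, and these properties of ${}^{*}$. Note that the second statement could also be deduced from the first applied to $A^{*}$, using $\Ker(A)=\Ima(A^{*})^{\perp}$, but the direct computation above is shorter.
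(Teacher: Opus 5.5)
Your proof is correct and follows the paper's argument: you read anisotropy directly off the criterion of Theorem~\ref{thm:mpfield}, take $u=y$ for the first intersection, and use $x^{*}x=w^{*}Ax=0$ for $x=A^{*}w\in\Ker(A)$. The paper's proof is the same three steps. Your extra justifications (commutativity to match the order of the factors, and $(MN)^{*}=N^{*}M^{*}$, $(M^{*})^{*}=M$) are accurate, and so is your remark that $\Ker(A)=\Ima(A^{*})^{\perp}$.
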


\begin{proof}
Writing $v=(v_{1},\dots,v_{s})^{\mathsf T}$ we have
$v^{*}v=\sum_{k}\varphi(v_{k})v_{k}$, so $v^{*}v=0$ forces $v=0$ by
Theorem~\ref{thm:mpfield}. If $y\in\Ima(A)\cap\Ima(A)^{\perp}$, then
$y^{*}u=0$ for every $u\in\Ima(A)$, by the definition of $\Ima(A)^{\perp}$. Since
$y$ belongs to $\Ima(A)$ as well, we may take $u=y$, which gives $y^{*}y=0$ and
therefore $y=0$. If $y\in\Ker(A)\cap\Ima(A^{*})$, say $y=A^{*}z$ with $Ay=0$, then
$y^{*}y=(A^{*}z)^{*}y=z^{*}Ay=0$, so $y=0$.
\end{proof}

\begin{example}[an isotropic vector destroys $A\{1,3,4\}$]
\label{ex:isotropic}
Take $K=\Q(i)$ with the \emph{identity} involution, so that $A^{*}=A^{\mathsf T}$,
and let $v=(1,i)^{\mathsf T}$. Then $v^{*}v=1+i^{2}=0$ with $v\neq0$, so $(K,\mathrm{id})$
is not a Moore--Penrose field. Put
\[
  A=v\,v^{\mathsf T}=\begin{pmatrix}1&i\\i&-1\end{pmatrix},
  \qquad \rank A=1 .
\]
The inner condition alone is satisfiable: $AXA=A$ holds on the three-parameter
family
\[
  X=\begin{pmatrix}
    1-i(x_{12}+x_{21})+x_{22} & x_{12}\\[2pt] x_{21} & x_{22}
  \end{pmatrix},
  \qquad x_{12},x_{21},x_{22}\in K,
\]
so $A\{1\}$ has dimension three, the classical value $nm-r^{2}$ of
\cite[Thm.~4.1]{StanimirovicLMA2021}. Adjoining \eqref{eq:P3} and \eqref{eq:P4}, however, leaves
nothing: $A\{1,3,4\}=\emptyset$, and a fortiori $\pinv{A}$ does not exist. The
rank criterion of Theorem~\ref{thm:mpfield} says the same in one line, since
$A^{2}=0$ here and so $\rank(AA^{*})=0\neq1=\rank(A)$. Theorem~\ref{prop:dimVL} is therefore not a statement
that can be made about $A\{1,3,4\}$ in general and then specialized; over an
isotropic field the set it describes can be empty while $A\{1\}$ is as large as
ever.

Over $\C$ with conjugation, by contrast,
$v^{*}v=1\cdot\overline{1}+i\cdot\overline{i}=2\neq0$, the field is Moore--Penrose
and $\pinv{A}$ exists: what fails over $(\Q(i),\mathrm{id})$ is the choice of
involution and not the arithmetic.
\end{example}

This also answers an obvious question: why not work simply over $\C$, where
existence is automatic. The results below would read the same over
$\C(u_{1},\dots,u_{\ell})$, so the generality needs a reason, and the reason is
computational. We compute over $\Q$ or over $\Q(i)$, and not over $\C$, whose
elements are not representable. Besides, the statements themselves depend on
which field we fix: the admissible parameter values of
Corollary~\ref{thm:excep} are those of $K_{\varphi}$, which is $\R$ for $\C$ with
conjugation and $\Q$ for $\Q$ with the identity.
\begin{theorem}[dimension of $A\{1,3,4\}$; classical over $\C$,
see Remark~\ref{rem:dimattribution}]
\label{prop:dimVL}
Let $(K,\varphi)$ be a Moore--Penrose field and $A\in K^{m\times n}$ with
$\rank(A)=r$. Let $N\in K^{n\times(n-r)}$ and $M\in K^{m\times(m-r)}$ have
columns forming bases of $\Ker(A)$ and of $\Ker(A^{*})$ respectively. Then
\begin{equation}
\label{eq:VLparam}
  A\{1,3,4\}=\bigl\{\,\pinv{A}+N\,E\,M^{*}\ :\ E\in K^{(n-r)\times(m-r)}\,\bigr\},
\end{equation}
and the map $E\mapsto \pinv{A}+NEM^{*}$ is a bijection. In particular
\[
  d \;:=\; \dim A\{1,3,4\} \;=\; (n-r)(m-r),
\]
a quantity we refer to as $d$ throughout. The statement holds for every
$r\le\min(m,n)$, the case $r=\min(m,n)$ giving $d=0$ and
$A\{1,3,4\}=\{\pinv{A}\}$.
\end{theorem}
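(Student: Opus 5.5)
Since $(K,\varphi)$ is a Moore--Penrose field, $\pinv{A}$ exists, so $A\{1,3,4\}$ contains at least one point. The plan is to compare an arbitrary $X\in A\{1,3,4\}$ with $\pinv{A}$, using only additivity of the maps involved. This way the argument also covers the $\varphi$-semilinear case.

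First, the inclusion $\supseteq$. Put $X=\pinv{A}+NEM^{*}$. From $AN=0$ and $M^{*}A=(A^{*}M)^{*}=0$ we get $ANEM^{*}=0$ and $NEM^{*}A=0$. Hence $AXA=A\pinv{A}A=A$, $AX=A\pinv{A}$ and $XA=\pinv{A}A$. So (P1), (P3) and (P4) are inherited from $\pinv{A}$.

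Next, the inclusion $\subseteq$, which is the substantive step. Let $X\in A\{1,3,4\}$ and set $D=X-\pinv{A}$. By (P1) for both matrices, $ADA=0$. By (P3), $AD=AX-A\pinv{A}$ is a difference of two hermitian matrices, so $(AD)^{*}=AD$. Likewise $(DA)^{*}=DA$.

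I will show that $AD=0$ using Lemma~\ref{lem:anisotropy}. Each column of $AD$ lies in $\Ima(A)$. Moreover $A^{*}(AD)=A^{*}(AD)^{*}=A^{*}D^{*}A^{*}=(ADA)^{*}=0$, so each column $y$ of $AD$ satisfies $A^{*}y=0$, that is, $y\in\Ima(A)^{\perp}$. Anisotropy then gives $y=0$, hence $AD=0$. The symmetric argument applied to $(DA)^{*}$ gives $DA=0$: the rows of $DA$ lie in $\Ima(A^{*})$ after applying ${}^{*}$, and they are killed by $A$. Equivalently, the columns of $(DA)^{*}=A^{*}D^{*}$ lie in $\Ima(A^{*})\cap\Ker(A)$, which is $\{0\}$ by the lemma.

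From $AD=0$, every column of $D$ lies in $\Ker(A)=\Ima(N)$, so $D=NF$ for some $F$. From $DA=0$ we get $A^{*}D^{*}=0$, so the columns of $D^{*}$ lie in $\Ker(A^{*})=\Ima(M)$, i.e. $D=GM^{*}$. Since $N$ has full column rank, it has a left inverse $N^{L}$. Then $F=N^{L}D=N^{L}GM^{*}$, so $D=NN^{L}GM^{*}=NEM^{*}$ with $E=N^{L}G$, where we use $NN^{L}D=D$ because the columns of $D$ lie in $\Ima(N)$.

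For injectivity: $N$ has a left inverse and $M^{*}$ has a right inverse, since $M$ has full column rank. Therefore $NEM^{*}=NE'M^{*}$ forces $E=E'$. This also makes $A\{1,3,4\}$ an affine space of dimension $(n-r)(m-r)$, since the map $E\mapsto NEM^{*}$ is $K$-linear and injective. The column counts $n-r=\dim\Ker A$ and $m-r=\dim\Ker A^{*}$ come from rank--nullity together with $\rank A^{*}=\rank A$. The boundary cases are immediate: when $r=\min(m,n)$, one of $N$ or $M$ is empty, so $d=0$.

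The main obstacle is the step from $ADA=0$ to $AD=0$ and $DA=0$. This is exactly where the Moore--Penrose hypothesis, through anisotropy, is needed; Example~\ref{ex:isotropic} shows it fails otherwise. The care required is in checking that the ${}^{*}$-manipulations, $(PQ)^{*}=Q^{*}P^{*}$ and $(P^{*})^{*}=P$, hold for a general involutory automorphism, which they do entrywise.
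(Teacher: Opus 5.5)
Your proof is correct and follows the paper's argument. You reduce to the homogeneous system satisfied by $D=X-\pinv{A}$, use both halves of Lemma~\ref{lem:anisotropy} to upgrade $ADA=0$ plus self-adjointness to $AD=0$ and $DA=0$, and then factor $D$ through the kernel bases $N$ and $M$. The differences are cosmetic: you obtain $A^{*}(AD)=(ADA)^{*}=0$ in one line where the paper reads off the rows of the self-adjoint matrix $W=AZ$, and you build $E$ with a left inverse of $N$ where the paper compares row spaces.
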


\begin{proof}
As the solution set of an affine-linear system containing $\pinv{A}$,
$A\{1,3,4\}$ is the coset $\pinv{A}+\mathcal{Z}$, where $\mathcal{Z}$ is the
solution space of
\begin{equation}
\label{eq:homog}
  AZA=0,\qquad (AZ)^{*}=AZ,\qquad (ZA)^{*}=ZA.
\end{equation}
We claim $\mathcal{Z}=\{Z: AZ=0 \text{ and } ZA=0\}$. One inclusion is
immediate. For the converse let $Z$ satisfy \eqref{eq:homog} and put $W=AZ$, so
that $W$ is self-adjoint and $WA=AZA=0$. Fix $j$ and let $w=W_{\bullet j}$. From
$W=AZ$ we get $w=A\,Z_{\bullet j}\in\Ima(A)$; from $WA=0$ the product of the $j$-th row of
$W$ with $A$ is zero, and since $W$ is self-adjoint that row is $w^{*}$, so
$w^{*}A=0$ and $w\in\Ima(A)^{\perp}$. By Lemma~\ref{lem:anisotropy}, $w=0$, and as $j$ was
arbitrary $AZ=0$. The same argument applied to $V=ZA$ gives $ZA=0$: here $V$ is
self-adjoint, $AV=AZA=0$, the $j$-th column $v$ of $V$ lies in $\Ker(A)$, and
self-adjointness with $V=ZA$ gives
$v=A^{*}(Z_{j\bullet})^{*}\in\Ima(A^{*})$, whence $v=0$ by the second part of
the lemma.

It remains to identify $\mathcal{Z}$. The condition $AZ=0$ says that every column
of $Z$ lies in $\Ker(A)$, so $Z=NC$ for a unique $C\in K^{(n-r)\times m}$. The
condition $ZA=0$ says that every row $z$ of $Z$ satisfies $zA=0$, that is
$z^{*}\in\Ker(A^{*})$, so the rows of $Z$ lie in the span of the rows of $M^{*}$.
As $N$ has full column rank, $Z=NC$ has the same row space as $C$, so $C=EM^{*}$
for a unique $E$. Hence $Z=NEM^{*}$; conversely every such matrix satisfies
$AZ=0$ and $ZA=0$ because $AN=0$ and $M^{*}A=0$, and uniqueness of $E$ follows
from $N$ and $M$ having full column rank. Finally $(n-r)(m-r)=0$ if and only if
$r=n$ or $r=m$, that is $r=\min(m,n)$, and an affine subspace of dimension zero
containing $\pinv{A}$ is $\{\pinv{A}\}$.
\end{proof}

\begin{remark}[attribution]
\label{rem:dimattribution}
Theorem~\ref{prop:dimVL} is not new over $\C$. The characterizations of
$A\{1,3\}$ and $A\{1,4\}$ are classical \cite[Ch.~2]{BenIsraelGreville2003}, and
intersecting them gives
$A\{1,3,4\}=\pinv{A}+(I-\pinv{A}A)\,\C^{n\times m}\,(I-A\pinv{A})$, where
$I-\pinv{A}A$ and $I-A\pinv{A}$ are the orthogonal projectors onto $\Ker(A)$ and
$\Ima(A)^{\perp}$; that is \eqref{eq:VLparam} written with projectors in place of
kernel bases. We state the theorem because we require it over an arbitrary
Moore--Penrose field, where the form $\langle v,w\rangle$ is not positive definite,
so that what the classical argument takes for granted, namely
$\Ima(A)\cap\Ima(A)^{\perp}=\{0\}$, is not automatic. Anisotropy is what supplies
it, and Example~\ref{ex:isotropic} shows that it can fail: the proof above
accordingly goes through Lemma~\ref{lem:anisotropy} and uses no inner-product
structure beyond it.
\end{remark}

It is worth seeing how large this reduction is. The naive formulation has $nm$
unknowns, while the residual system has only $d=(n-r)(m-r)$ of them. For the
rank-two matrices of order $m\times(m+1)$ that we use in
Section~\ref{sec:cost} this means $m(m+1)$ unknowns against $(m-2)(m-1)$, that
is, seventy-two against forty-two when $m=8$. The next example, the smallest one
with $d>0$, is the one we use in the rest of the section.

\begin{example}[a rank-deficient parametric matrix, $d=1$]
\label{ex:running}
As in Example~\ref{ex:excep} let $K=\R(u)$ with the identity involution, and
\[
  A=\begin{pmatrix}1&u\\u&u^{2}\end{pmatrix}=v\,v^{\mathsf T},
  \qquad v=\begin{pmatrix}1\\u\end{pmatrix},
  \qquad \rank A=1 .
\]
Anisotropy is visible at once: $v^{*}v=1+u^{2}\neq0$ in $\R(u)$, and it is this
non-vanishing that makes the pseudoinverse
\[
  \pinv{A}=\frac{1}{(1+u^{2})^{2}}\begin{pmatrix}1&u\\u&u^{2}\end{pmatrix}
\]
available; the four conditions \eqref{eq:P1}--\eqref{eq:P4} are checked directly.
Here $m=n=2$ and $r=1$, so Theorem~\ref{prop:dimVL} gives $d=1$ against $nm=4$.
Since $A$ is symmetric, $\Ker(A)=\Ker(A^{*})$ is spanned by
$N=M=(-u,1)^{\mathsf T}$, and \eqref{eq:VLparam} reads
\begin{equation}
\label{eq:runningparam}
  A\{1,3,4\}
  =\Bigl\{\,\pinv{A}+t\begin{pmatrix}u^{2}&-u\\-u&1\end{pmatrix}
    \ :\ t\in\R(u)\,\Bigr\},
\end{equation}
a line in $\R(u)^{2\times2}$ through $\pinv{A}$, whose single unknown $t$ is the
entry of the $1\times1$ matrix $E$.
\end{example}

\subsection{The outer condition is affine-linear on $A\{1,3,4\}$}
\label{ssec:linearity}

As the degree count of Section~\ref{ssec:reduction} shows, \eqref{eq:P2} is the only
Penrose condition that is genuinely quadratic. Once restricted to $A\{1,3,4\}$, however, its quadratic
terms cancel identically and the residual becomes affine-linear. By
Theorem~\ref{prop:dimVL} only the case $d>0$ needs consideration: at full rank
$d=0$ and $A\{1,3,4\}=\{\pinv{A}\}$, so \eqref{eq:P2} is redundant.

The identity that we use is classical over $\C$: if $Y\in A\{1,4\}$ and
$Z\in A\{1,3\}$, then $YAZ=\pinv{A}$; see \cite{BenIsraelGreville2003}. Its proof
is short and it works over any Moore--Penrose field, so we recall it here. The
parametrizations
\[
  A\{1,3\}=\pinv{A}+(I-\pinv{A}A)K^{n\times m},
  \qquad
  A\{1,4\}=\pinv{A}+K^{n\times m}(I-A\pinv{A})
\]
follow from \eqref{eq:P1}--\eqref{eq:P4} and Lemma~\ref{lem:anisotropy}, which is
what makes the self-adjoint idempotent with image $\Ima(A)$ unique. The
computation rests on a single observation: by \eqref{eq:P1} we have
\[
  A\bigl(I-\pinv{A}A\bigr)=0
  \qquad\text{and}\qquad
  \bigl(I-A\pinv{A}\bigr)A=0 ,
\]
so what we add to $\pinv{A}$ is invisible to $A$, on one side or on the other.
Every cross term of $YAZ$ contains one of these two products, so it vanishes, and
only $\pinv{A}A\pinv{A}=\pinv{A}$ remains. Since
$A\{1,3,4\}=A\{1,3\}\cap A\{1,4\}$, we can take $Y=Z=X$, and we get
$XAX=\pinv{A}$ for every $X\in A\{1,3,4\}$.

\begin{theorem}[linearity of the residual system]
\label{thm:residuallinear}
Let $(K,\varphi)$ be a Moore--Penrose field and $A\in K^{m\times n}$. Then
\begin{equation}
\label{eq:residualidentity}
  XAX-X=\pinv{A}-X
  \qquad\text{for every }X\in A\{1,3,4\},
\end{equation}
and therefore the restriction of \eqref{eq:P2} to $A\{1,3,4\}$ is an affine-linear
system with unique solution $X=\pinv{A}$.
\end{theorem}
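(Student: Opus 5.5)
The plan is to prove the identity $XAX=\pinv{A}$ for every $X\in A\{1,3,4\}$ directly from the parametrization of Theorem~\ref{prop:dimVL}. This avoids re-deriving the separate descriptions of $A\{1,3\}$ and $A\{1,4\}$. Fix a Moore--Penrose field $(K,\varphi)$ and $A$ of rank $r$. Choose $N$ and $M$ as in that theorem, with $AN=0$ and $M^{*}A=0$. Then every $X\in A\{1,3,4\}$ can be written as $X=\pinv{A}+NEM^{*}$ for a unique $E$.

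Substituting, we expand
\[
  XAX=(\pinv{A}+NEM^{*})A(\pinv{A}+NEM^{*})
\]
into four terms. The first is $\pinv{A}A\pinv{A}$, which equals $\pinv{A}$ by \eqref{eq:P2} for $\pinv{A}$. The second is $\pinv{A}A\,NEM^{*}$, which vanishes because $AN=0$. The third is $NEM^{*}A\pinv{A}$, which vanishes because $M^{*}A=0$. The quadratic term $NEM^{*}ANEM^{*}$ vanishes for either reason. Hence $XAX=\pinv{A}$, and subtracting $X$ gives \eqref{eq:residualidentity}.

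For the consequence, observe that on $A\{1,3,4\}$ the residual map $X\mapsto XAX-X$ coincides with $X\mapsto \pinv{A}-X$. In the coordinates $E$ this is $E\mapsto -NEM^{*}$, which is $K$-linear, since it is a constant $\pinv{A}$ plus a linear function of $E$; in particular it is affine-linear. Its zero set is $\{E:NEM^{*}=0\}$. By the injectivity part of Theorem~\ref{prop:dimVL}, which relies on $N$ and $M$ having full column rank, this set is $\{0\}$, so the unique solution is $X=\pinv{A}$. When $d=0$ the statement is trivial, because $A\{1,3,4\}=\{\pinv{A}\}$.

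The main point needing care is that the expansion must use only the matrix identities $AN=0$ and $M^{*}A=0$, and not any $\varphi$-linearity. Multiplication is $K$-bilinear, so the involution enters only through the existence of $\pinv{A}$ and the parametrization already established in Theorem~\ref{prop:dimVL}, which in turn relied on Lemma~\ref{lem:anisotropy}. Consequently the argument holds verbatim over any Moore--Penrose field, including non-identity involutions. I expect no further obstacle beyond making this dependence explicit.
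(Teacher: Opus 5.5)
Your proof is correct. Its cancellation mechanism is the paper's: whatever is added to $\pinv{A}$ is annihilated by $A$ on one side, so every cross term of $XAX$ vanishes and only $\pinv{A}A\pinv{A}=\pinv{A}$ survives.

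The route differs, though. The paper gets $XAX=\pinv{A}$ as the case $Y=Z=X$ of the classical identity $YAZ=\pinv{A}$ for $Y\in A\{1,4\}$ and $Z\in A\{1,3\}$. It reads that identity off from the projector parametrizations $A\{1,3\}=\pinv{A}+(I-\pinv{A}A)K^{n\times m}$ and $A\{1,4\}=\pinv{A}+K^{n\times m}(I-A\pinv{A})$, which it asserts rather than proves. The coordinates $X=\pinv{A}+NEM^{*}$ enter only afterwards, to show that the residual $-NEM^{*}$ vanishes only at $E=0$.

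You use those coordinates for both halves, with $AN=0$ and $M^{*}A=0$ playing the roles of $A(I-\pinv{A}A)=0$ and $(I-A\pinv{A})A=0$. So your argument needs a single parametrization and rests only on Theorem~\ref{prop:dimVL}, which the paper does prove. In exchange, the paper's route gives a stronger statement: the mixed identity holds for $Y$ and $Z$ taken from different one-sided classes, with no choice of kernel bases.

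One wording slip: $E\mapsto-NEM^{*}$ has no constant term, so it is linear outright, not ``a constant $\pinv{A}$ plus a linear function of $E$''. The constant appears only when the residual is written as $\pinv{A}-X$ in terms of $X$.
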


\begin{proof}
The identity above gives $XAX=\pinv{A}$ for every $X\in A\{1,3,4\}$. So on that
affine subspace the quadratic expression $XAX$ is not just of lower degree, it is
constant, and its quadratic terms cancel identically. Subtracting $X$ from both
sides gives \eqref{eq:residualidentity}, so the residual of \eqref{eq:P2} on
$A\{1,3,4\}$ is the displacement $\pinv{A}-X$ from the solution. In coordinates,
if we write $X=\pinv{A}+NEM^{*}$ as in \eqref{eq:VLparam}, that residual is
$-NEM^{*}$, which is linear in the $d$ entries of $E$ and vanishes only at $E=0$.
\end{proof}

The significance of Theorem~\ref{thm:residuallinear} is not the identity
$XAX=\pinv{A}$ but its residual form: the nonlinear Penrose condition becomes the
displacement from the unique solution. That is what
Remark~\ref{cor:twolinearsolves} and Corollary~\ref{rem:zerodim} use, and neither
follows from $\pinv{A}=YAZ$ without it.

\begin{remark}[$\pinv{A}$ by two linear solves]
\label{cor:twolinearsolves}
Over a Moore--Penrose field, $\pinv{A}$ can be computed in two stages:
\begin{enumerate}\itemsep2pt
  \item solve \eqref{eq:P1}, \eqref{eq:P3} and \eqref{eq:P4} to obtain the general
        element of $A\{1,3,4\}$;
  \item impose the residual equation $XAX-X=0$ on that affine space.
\end{enumerate}
When $\varphi=\mathrm{id}$ both stages are affine-linear over $K$, first in $nm$
unknowns and then in $d=(n-r)(m-r)$. When $\varphi\neq\mathrm{id}$ they are
affine-linear over the fixed field $K_{\varphi}$ instead, and the two counts
double, to $2nm$ and then $2d$, because $\{\mathrm{id},\varphi\}$ is a group of
order two and so $[K:K_{\varphi}]=2$. For $(\C,\overline{\phantom{a}})$ the two
unknowns of each entry are its real and its imaginary part.

For $\varphi=\mathrm{id}$ the first system is affine-linear over $K$ by the degree
count of Section~\ref{ssec:reduction}. For a nontrivial involution the entries of
$(AX)^{*}-AX$ and $(XA)^{*}-XA$ are $\varphi$-semilinear in the unknowns, so they
are additive and $K_{\varphi}$-linear but not $K$-linear. The solution set is
still the coset $\pinv{A}+\{NEM^{*}\}$, of dimension $d$ over $K$, because
additivity is all that the argument of Theorem~\ref{prop:dimVL} uses. Restricted
to it the second system is affine-linear with unique solution $\pinv{A}$ by
Theorem~\ref{thm:residuallinear}.
\end{remark}

Throughout what follows $\widetilde{X}$ denotes the matrix of unknowns after the
first linear solve, that is, the general element of $A\{1,3,4\}$ written with its
$d$ surviving indeterminates $x_{1},\dots,x_{d}$; the \emph{residual ideal} is the
ideal generated by the entries of $\widetilde{X}A\widetilde{X}-\widetilde{X}$.

\begin{corollary}[structure of the residual ideal]
\label{rem:zerodim}
The residual ideal is a maximal ideal of the polynomial ring, and in particular it
is radical and zero-dimensional over any extension of $K$.
\end{corollary}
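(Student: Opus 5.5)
Write $\widetilde{X}=\pinv{A}+NEM^{*}$, where $E$ collects the $d$ surviving indeterminates $x_{1},\dots,x_{d}$, as in Theorem~\ref{prop:dimVL}. The plan is to show that the residual ideal equals $(x_{1},\dots,x_{d})$, the ideal of the point $E=0$.

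The first step is the identity $\widetilde{X}A\widetilde{X}-\widetilde{X}=-NEM^{*}$. This is an identity of polynomial matrices, so it cannot simply be quoted from Theorem~\ref{thm:residuallinear}, which is stated pointwise on $A\{1,3,4\}$. I would prove it by expanding directly:
\[
  \widetilde{X}A\widetilde{X}
  =\pinv{A}A\pinv{A}+\pinv{A}ANEM^{*}+NEM^{*}A\pinv{A}+NEM^{*}ANEM^{*}.
\]
Since $AN=0$ and $M^{*}A=0$, every term except the first vanishes in $K[x_{1},\dots,x_{d}]^{n\times m}$. What remains is $\pinv{A}$, by \eqref{eq:P2}. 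In particular the quadratic terms cancel at the level of polynomials, not just at points.

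Consequently the residual ideal is generated by the entries of $NEM^{*}$, which are $K$-linear forms in the $x_{k}$. The second step is to show that these forms span all of $\langle x_{1},\dots,x_{d}\rangle_{K}$. The map $E\mapsto NEM^{*}$ is $K$-linear, and it is injective by Theorem~\ref{prop:dimVL}, since $N$ and $M$ have full column rank.

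To turn injectivity into the spanning statement I would write everything in coordinates. The entries of $NEM^{*}$ are $\sum_{k}c_{ij,k}x_{k}$, where the coefficient vectors $c_{ij}\in K^{d}$ are the rows of the matrix of the map. Injectivity means this $nm\times d$ matrix has rank $d$, so its rows span the dual space and every $x_{k}$ is a $K$-combination of the generators. Hence the residual ideal is $(x_{1},\dots,x_{d})$, whose quotient is $K$, so the ideal is maximal. The same linear-algebra fact holds after extending scalars to any field $L\supseteq K$, because the rank of a matrix does not change under field extension. Thus $L[x]/(x_{1},\dots,x_{d})\cong L$, which gives maximal and therefore radical and zero-dimensional over $L$.

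The main obstacle is the non-identity involution. There, Remark~\ref{cor:twolinearsolves} parametrizes $A\{1,3,4\}$ over $K_{\varphi}$ with $2d$ real unknowns, and entries of $M^{*}$ may involve $\varphi$. I would handle this by noting that the parametrization is still $E\mapsto NEM^{*}$, which is $K$-linear in $E$: $M^{*}$ is a fixed matrix and $\varphi$ is never applied to $E$. So the argument above applies verbatim in the $d$ unknowns over $K$. If the $2d$ coordinates over $K_{\varphi}$ are intended instead, the same argument works with $K_{\varphi}$-linear forms: the map is injective over $K_{\varphi}$, so the real and imaginary parts of the entries span all $2d$ coordinates.
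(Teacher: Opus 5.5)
Your proof is correct and is essentially the paper's argument: the residual entries are affine-linear forms in the $d$ unknowns whose linear parts have rank $d$, so elimination reduces them to the generators of the ideal of a single point. Your direct expansion using $AN=0$ and $M^{*}A=0$ makes this an identity of polynomials, whereas the paper takes it from Theorem~\ref{thm:residuallinear}, where it is stated pointwise.

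One small correction concerns coordinates. The paper's $\widetilde{X}$ is whatever parametrization the first Gaussian elimination produces, so in general $\widetilde{X}=\pinv{A}+N\,E(x)\,M^{*}$, with $E(x)$ an invertible affine function of the free unknowns. The residual ideal is then $\langle x_{1}-c_{1},\dots,x_{d}-c_{d}\rangle$ rather than $\langle x_{1},\dots,x_{d}\rangle$. Your argument gives this after that affine change of coordinates, which does not affect maximality.
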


\begin{proof}
By Theorem~\ref{thm:residuallinear} the entries of
$\widetilde{X}A\widetilde{X}-\widetilde{X}$ are $nm$ affine-linear forms in the
$d$ unknowns $x_{1},\dots,x_{d}$, of which exactly $d$ are independent, and they
vanish only at $\pinv{A}$. So Gaussian
elimination reduces the generating set to $\{x_{i}-c_{i}\}$, where
$c_{1},\dots,c_{d}\in K$ are the values of the unknowns for which
$\widetilde{X}=\pinv{A}$. This set generates the maximal ideal of the point
$(c_{1},\dots,c_{d})$.
\end{proof}

\begin{remark}[the reduced Gr\"obner basis of the residual ideal]
\label{rem:gb}
The set $\{x_{1}-c_{1},\dots,x_{d}-c_{d}\}$ of the proof above is already reduced
in any monomial order, so it is the reduced Gr\"obner basis of the residual
ideal, and it is the same one in every monomial order.
\end{remark}

Since $\pinv{A}$ is unique, it is natural to infer that the residual ideal is
zero-dimensional, and that is what is done, in one line, in
\cite[\S4]{SendraSendra2017}. Corollary~\ref{rem:zerodim} proves it, and it says
something more. Uniqueness of $\pinv{A}$ is a statement about the solutions with
entries in $K$, while the dimension of the ideal is a statement about the
solution set over the algebraic closure of $K$, and that is where the corollary
adds information: there the residual variety is still one single reduced point.

\begin{example}[Example~\ref{ex:running} continued: the residual system]
\label{ex:runningcont}
Substituting the parametrization \eqref{eq:runningparam} into the outer condition
gives
\[
  XAX-X
  \;=\;-\,t\begin{pmatrix}u^{2}&-u\\-u&1\end{pmatrix}
  \;=\;\pinv{A}-X ,
\]
which is \eqref{eq:residualidentity} in this instance: the quadratic terms in $t$
have cancelled. Equating entries to zero gives the residual system
\[
  -u^{2}t=0,\qquad ut=0,\qquad -t=0,
\]
three affine-linear equations in one unknown generating the ideal
$\langle t\rangle\subset\R(u)[t]$. This is the maximal ideal of the point $t=0$,
its reduced Gr\"obner basis is $\{t\}$, and back-substitution returns
$X=\pinv{A}$. The total degree of the residual
system in the unknowns is recorded for every matrix in the experiments of
Section~\ref{sec:cost} and is $1$ throughout, so
Theorem~\ref{thm:residuallinear} is tested on each instance rather than assumed.
\end{example}

\begin{remark}[the linearity belongs to the system, not to \eqref{eq:P2}]
\label{rem:notP2alone}
Theorem~\ref{thm:residuallinear} does not say that the outer condition is linear.
On its own, $XAX=X$ defines a set that is not an affine subspace, and this is
known: for $A\neq0$ over any field not isomorphic to $\mathbb{F}_{2}$,
Stanimirovi\'c et al.\ \cite[Lem.~4.1]{StanimirovicLMA2021} prove that $A\{2\}$ is
an affine algebraic variety and not an affine subspace, and both hypotheses are
automatic here. The one-line instance we need is that $X=0$ and $X=\pinv{A}$ both
satisfy $XAX=X$, while
$(t\pinv{A})A(t\pinv{A})-t\pinv{A}=(t^{2}-t)\pinv{A}$ vanishes only for
$t\in\{0,1\}$, so the segment joining two solutions consists of solutions only at
its endpoints.

For $A\in K^{1\times n}$ the whole set is visible at once: $XAX-X=(AX-1)X$ entry
by entry, so
\[
  A\{2\}=\{0\}\ \cup\ \{X\in K^{n\times1}: AX=1\},
\]
a hyperplane of dimension $n-1$ together with a point that does not lie on it,
worked out for $n=2$ in \cite[Ex.~4.2]{StanimirovicLMA2021}. That extra point is
exactly what obstructs affineness, and it is what
Theorem~\ref{thm:residuallinear} has to work around.

So Theorem~\ref{thm:residuallinear} is a statement about the Moore--Penrose
problem, and only once \eqref{eq:P1}, \eqref{eq:P3} and \eqref{eq:P4} have been
imposed. We claim nothing similar for $A\{2\}$ alone, nor for the other
$\{i,j\}$-inverses, and for those the Gr\"obner approach keeps its role.
\end{remark}

\subsection{The complex case}
\label{ssec:complex}

For complex matrices the hermitian conditions \eqref{eq:P3} and \eqref{eq:P4} are
not $\C$-linear in the entries of the unknown: the map $X\mapsto X^{*}$ conjugates
them, so $\overline{x_{ij}}$ is not a polynomial function of $x_{ij}$ and the
Penrose equations are not a polynomial system over $\C[x_{11},\dots,x_{nm}]$.
Following the real--imaginary splitting proposed in \cite[\S4]{SendraSendra2017},
write $A=A_{0}+iB_{0}$ and $X=X_{0}+iY_{0}$ with $A_{0},B_{0}\in\R^{m\times n}$ and
$X_{0},Y_{0}\in\R^{n\times m}$; this replaces the $nm$ complex unknowns by $2nm$
real ones. The following theorem carries the splitting out, giving the four real
matrix equations explicitly.

\begin{theorem}[real splitting of the hermitian conditions; M.\ Coronado]
\label{thm:complex}
Let $A=A_{0}+iB_{0}$ with $A_{0},B_{0}\in\R^{m\times n}$ and
$X=X_{0}+iY_{0}$ with $X_{0},Y_{0}\in\R^{n\times m}$. Then \eqref{eq:P3} holds
if and only if
\begin{align}
  \bigl(A_{0}X_{0}-B_{0}Y_{0}\bigr)^{\mathsf T}
    -\bigl(A_{0}X_{0}-B_{0}Y_{0}\bigr) &= 0, \tag{C1}\label{eq:C1}\\
  \bigl(A_{0}Y_{0}+B_{0}X_{0}\bigr)^{\mathsf T}
    +\bigl(A_{0}Y_{0}+B_{0}X_{0}\bigr) &= 0, \tag{C2}\label{eq:C2}
\end{align}
and \eqref{eq:P4} holds if and only if
\begin{align}
  \bigl(X_{0}A_{0}-Y_{0}B_{0}\bigr)^{\mathsf T}
    -\bigl(X_{0}A_{0}-Y_{0}B_{0}\bigr) &= 0, \tag{C3}\label{eq:C3}\\
  \bigl(Y_{0}A_{0}+X_{0}B_{0}\bigr)^{\mathsf T}
    +\bigl(Y_{0}A_{0}+X_{0}B_{0}\bigr) &= 0. \tag{C4}\label{eq:C4}
\end{align}
Equivalently: $AX$ is hermitian if and only if $\operatorname{Re}(AX)$ is
symmetric and $\operatorname{Im}(AX)$ is skew-symmetric, and likewise for $XA$.
\end{theorem}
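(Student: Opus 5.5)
The plan is a direct computation that separates real and imaginary parts. It uses only two facts: the complex conjugate transpose of a matrix $Z=Z_{0}+iZ_{1}$ with real blocks is $Z^{*}=Z_{0}^{\mathsf T}-iZ_{1}^{\mathsf T}$, and a complex matrix vanishes if and only if its real and imaginary parts vanish.

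\emph{Step 1: expand the products.} Multiply out $AX=(A_{0}+iB_{0})(X_{0}+iY_{0})$ using $i^{2}=-1$, giving
\[
  \operatorname{Re}(AX)=A_{0}X_{0}-B_{0}Y_{0},\qquad \operatorname{Im}(AX)=A_{0}Y_{0}+B_{0}X_{0},
\]
with both blocks in $\R^{m\times m}$. In the same way
\[
  \operatorname{Re}(XA)=X_{0}A_{0}-Y_{0}B_{0},\qquad \operatorname{Im}(XA)=Y_{0}A_{0}+X_{0}B_{0}.
\]
The order of the factors must be kept throughout, since the real blocks need not commute.

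\emph{Step 2: split the hermitian condition.} Write $W=AX=W_{0}+iW_{1}$. Then
\[
  W^{*}-W=(W_{0}^{\mathsf T}-W_{0})-i\,(W_{1}^{\mathsf T}+W_{1}).
\]
Because $W_{0}^{\mathsf T}-W_{0}$ and $W_{1}^{\mathsf T}+W_{1}$ are real, \eqref{eq:P3} holds if and only if both vanish. Substituting the expressions from Step~1 turns these two conditions into \eqref{eq:C1} and \eqref{eq:C2}. The same argument with $W=XA$ turns \eqref{eq:P4} into \eqref{eq:C3} and \eqref{eq:C4}.

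Stated without coordinates, the two conditions say that $\operatorname{Re}(W)$ is symmetric and $\operatorname{Im}(W)$ is skew-symmetric. That is exactly the ``equivalently'' clause of the theorem, so it needs no separate proof.

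There is no real obstacle here. The only points needing care are the sign from conjugation, which is why the imaginary part gives a sum (skew-symmetry) rather than a difference, and keeping the noncommuting real blocks in the correct order in Step~1.
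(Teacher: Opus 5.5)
Your proposal is correct and is essentially the paper's proof: the paper writes $AX=U+iV$ with $U=A_{0}X_{0}-B_{0}Y_{0}$ and $V=A_{0}Y_{0}+B_{0}X_{0}$ real, notes $(AX)^{*}=U^{\mathsf T}-iV^{\mathsf T}$, and separates real and imaginary parts to get $U^{\mathsf T}=U$ and $V^{\mathsf T}=-V$, then repeats the argument for $XA$. Your care about keeping the noncommuting real blocks in order is appropriate and gives exactly the expressions in \eqref{eq:C3}--\eqref{eq:C4}.
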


\begin{proof}
Write $AX=U+iV$, where $U=A_{0}X_{0}-B_{0}Y_{0}$ and $V=A_{0}Y_{0}+B_{0}X_{0}$ are
both real. Then $(AX)^{*}=U^{\mathsf T}-iV^{\mathsf T}$, so $(AX)^{*}=AX$ is
equivalent to $U^{\mathsf T}=U$ and $V^{\mathsf T}=-V$, which is
\eqref{eq:C1}--\eqref{eq:C2}. The same argument applied to $XA$ gives
\eqref{eq:C3}--\eqref{eq:C4}.
\end{proof}

The complex Penrose system can therefore be treated as a real affine-linear system
in the real and imaginary parts of the unknown entries, which is what
Algorithm~\ref{alg:complex} of Section~\ref{sec:algorithms} does.

\section{Degrees and the exceptional set}
\label{sec:degrees}

The previous section leaves two questions open. The first one is a question of
cost. Theorem~\ref{thm:residuallinear} reduces the computation to linear algebra
over $K$, but an operation in $K=\Q(u_{1},\dots,u_{\ell})$ is an operation with
rational functions, and in practice it involves simplification, gcd computation
and normalization. The cost of all that grows with the degrees of the operands.
So, if we want a cost model that means something, we have to bound those degrees.
The second one is a question of structure. The exceptional set $\mathcal{E}(A)$
of Definition~\ref{def:exceptional} is defined in terms of
$\operatorname{den}(\pinv{A})$, so we cannot know it until we have computed
$\pinv{A}$. This is not satisfactory when the exceptional set is precisely what
we are interested in, as happens in Section~\ref{sec:application}.

We answer both questions with a classical determinantal representation, that we
recall first and use afterwards. Let $A\in K^{m\times n}$ have rank $r$, and let
us write
\begin{equation}
\label{eq:Sdef}
  S(A)\;:=\;\sum_{|I|=|J|=r}\varphi\bigl(\det A[I,J]\bigr)\cdot\det A[I,J],
\end{equation}
where the sum runs over all $I\subseteq\{1,\dots,m\}$ and all
$J\subseteq\{1,\dots,n\}$ of cardinality $r$. When $\varphi$ is the identity, $S(A)$ is the sum of the squares of
all the maximal minors of $A$, that is, the squared volume of $A$ in the sense of
Ben-Israel \cite{BenIsrael1992}.

\begin{theorem}[{Bapat, Bhaskara Rao and Prasad
\cite[Thm.~13]{BapatBhaskaraPrasad1990}}]
\label{thm:bbp}
Let $R$ be an integral domain and let $A\in R^{m\times n}$ have rank $r$. For the
identity involution, $\pinv{A}$ exists over $R$ if and only if $S(A)$ is invertible
in $R$, and in that case
\begin{equation}
\label{eq:closedid}
  S(A)\cdot\bigl(\pinv{A}\bigr)_{ji}
  \;=\;\sum_{I\ni i}\ \sum_{J\ni j}
      \det A[I,J]\cdot\operatorname{cof}\bigl(A[I,J];\,i,\,j\bigr),
\end{equation}
where $\operatorname{cof}(A[I,J];i,j)$ is the cofactor of $a_{ij}$ inside
$A[I,J]$.
\end{theorem}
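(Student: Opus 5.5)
The plan is to pass to the field of fractions $F$ of $R$ and factor $A$ there. Both directions of the equivalence and the closed form \eqref{eq:closedid} will come from two applications of the Cauchy--Binet formula. Since $\rank A=r$ over $F$, choose a full-rank factorization $A=BC$ with $B\in F^{m\times r}$ and $C\in F^{r\times n}$, both of rank $r$. Every maximal minor factors as $\det A[I,J]=\det B[I,:]\,\det C[:,J]$. By Cauchy--Binet,
\[
  \det(B^{\mathsf T}B)\,\det(CC^{\mathsf T})
  =\sum_{I}\det B[I,:]^{2}\,\sum_{J}\det C[:,J]^{2}
  =S(A).
\]
Also $A^{\mathsf T}A=C^{\mathsf T}(B^{\mathsf T}B)C$ and $AA^{\mathsf T}=B(CC^{\mathsf T})B^{\mathsf T}$. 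These have rank $r$ exactly when $B^{\mathsf T}B$, respectively $CC^{\mathsf T}$, is invertible. By the rank criterion of Theorem~\ref{thm:mpfield}, $\pinv{A}$ therefore exists over $F$ if and only if $S(A)\neq0$. In that case the four Penrose conditions are checked directly for
\[
  \pinv{A}=C^{\mathsf T}(CC^{\mathsf T})^{-1}(B^{\mathsf T}B)^{-1}B^{\mathsf T}.
\]

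Next I derive \eqref{eq:closedid}. I write $(B^{\mathsf T}B)^{-1}B^{\mathsf T}=\operatorname{adj}(B^{\mathsf T}B)B^{\mathsf T}/\det(B^{\mathsf T}B)$ and use Cramer together with Cauchy--Binet on $B^{\mathsf T}B$ with one column replaced by a unit vector. This gives
\[
  \det(B^{\mathsf T}B)\bigl((B^{\mathsf T}B)^{-1}B^{\mathsf T}\bigr)_{ki}
  =\sum_{I\ni i}\det B[I,:]\,\bigl(\operatorname{adj}B[I,:]\bigr)_{ki},
\]
and symmetrically for $C$. Multiplying the two expressions and summing over $k$ produces $\det B[I,:]\det C[:,J]=\det A[I,J]$ times $\sum_{k}(\operatorname{adj}C[:,J])_{jk}(\operatorname{adj}B[I,:])_{ki}$. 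Since $\operatorname{adj}$ is multiplicative on square matrices and $A[I,J]=B[I,:]C[:,J]$, this sum is $(\operatorname{adj}A[I,J])_{ji}=\operatorname{cof}(A[I,J];i,j)$, which gives \eqref{eq:closedid} over $F$. Consequently, if $S(A)$ is a unit of $R$, the right-hand side of \eqref{eq:closedid} lies in $R$, so $\pinv{A}$ has entries in $R$. By uniqueness, it is the pseudoinverse over $R$.

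For the converse, suppose $\pinv{A}$ exists with entries in $R$. I pass to $r$-th compound matrices $C_r(\cdot)$. These are multiplicative and commute with transposition, so $C_r(\pinv{A})$ satisfies the four Penrose conditions for $C_r(A)$ and hence equals $\pinv{C_r(A)}$. From $C_r(A)=C_r(B)C_r(C)$, which is a column times a row $ab^{\mathsf T}$, the pseudoinverse is
\[
  \pinv{\bigl(ab^{\mathsf T}\bigr)}=\frac{ba^{\mathsf T}}{(a^{\mathsf T}a)(b^{\mathsf T}b)}=\frac{C_r(A)^{\mathsf T}}{S(A)}.
\]
So every quotient $\det A[I,J]/S(A)$ is an $r\times r$ minor of $\pinv{A}$ and lies in $R$. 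Then
\[
  S(A)^{-1}=\sum_{I,J}\Bigl(\frac{\det A[I,J]}{S(A)}\Bigr)^{2}\in R,
\]
so $S(A)$ is invertible in $R$.

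I expect the main obstacle to be this converse step, specifically justifying $C_r(\pinv{A})=\pinv{C_r(A)}$ cleanly over an integral domain, where no positivity is available. I would try to handle it by verifying the four Penrose conditions for the compound directly from multiplicativity of $C_r$. I would then invoke uniqueness over $F$, which needs only the existence just established.
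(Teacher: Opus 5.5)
Your proof is correct, and it differs from the paper's argument only in the existence part.

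The paper does not prove Theorem~\ref{thm:bbp}; it cites it. What it proves is the field version with involution, Proposition~\ref{prop:closedform}.

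\textbf{The closed form.} Your derivation of \eqref{eq:closedid} is that proof specialized to $\varphi=\mathrm{id}$:
\begin{itemize}
\item the Cramer--Cauchy--Binet computation for $\operatorname{adj}(B^{\mathsf T}B)B^{\mathsf T}$ is Lemma~\ref{lem:adj};
\item the assembly through $A=BC$, $\det(B^{\mathsf T}B)\det(CC^{\mathsf T})=S(B)S(C)=S(A)$ and $\operatorname{adj}A[I,J]=\operatorname{adj}C[:,J]\,\operatorname{adj}B[I,:]$ is the second half of that proof.
\end{itemize}
That last identity is anti-multiplicativity of the adjugate, not multiplicativity, although you do use the correct order.

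\textbf{The existence criterion.} This is where you genuinely differ. The paper gets $S(A)\neq0$ from anisotropy (Lemma~\ref{lem:anisotropy}). That is available only over a Moore--Penrose field, where existence is automatic. It says nothing over, for example, $\mathbb{Z}[i]$ with the identity involution: its fraction field $\Q(i)$ is isotropic (Example~\ref{ex:isotropic}), and $S(A)$ vanishes for $A=(1\ \ i)$.

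You replace anisotropy by the rank criterion of Theorem~\ref{thm:mpfield} together with $\rank(C^{\mathsf T}MC)=\rank M$, which holds over any field. You then add the compound-matrix argument, and this is what settles integrality over $R$. The identity $C_r(\pinv{A})=C_r(A)^{\mathsf T}/S(A)$ puts every $\det A[I,J]/S(A)$ in $R$. Summing their squares then shows $S(A)^{-1}\in R$.

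So your route proves the statement as cited, including the case $S(A)=0$ and the ring-theoretic converse. The paper's field-level argument does not touch either.

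One small correction: the uniqueness you invoke for $C_r(\pinv{A})=\pinv{C_r(A)}$ needs no existence result, because Penrose's uniqueness argument works over any commutative ring with involution. What you actually need from your first step is $S(A)\neq0$, so that the candidate $C_r(A)^{\mathsf T}/S(A)$ is defined.
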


The involutory case is not covered there: the final remark of that paper states
that the results of its Section~4 carry over to an involution ``with the necessary
modifications'', without giving either the modifications or a proof. Since
Section~\ref{sec:degrees} and Corollary~\ref{thm:excep} rest on that
extension, Proposition~\ref{prop:closedform} below supplies it for Moore--Penrose
fields. The special case in which $A$ has full row or column rank is due to
Bruening \cite{Bruening1987}, and the corresponding statements over an arbitrary
field go back to Gabriel \cite{Gabriel1969}. Over $\R$ with the transpose,
\eqref{eq:closedid} is Ben-Israel's reading of $\pinv{A}$ as a convex combination
of the inverses of the maximal nonsingular submatrices of $A$, weighted by their
squared volumes \cite{BenIsrael1992}.

The proof of Proposition~\ref{prop:closedform} settles one more point, which is
easy to get wrong. In the closed form the involution has to be applied to the
minor, and not to the cofactor. The two
choices agree when $r=1$, and they already differ when $r=2$, so it is worth
saying which one of them is the correct one. We prove it with the adjugate identity of
Lemma~\ref{lem:adj} below, which comes in turn from Cramer's rule and the
Cauchy--Binet formula, and with one further appeal to Cauchy--Binet, which turns
$\det(F^{*}F)$ into $S(F)$.

\begin{lemma}[adjugate identity]
\label{lem:adj}
Let $F\in K^{m\times r}$, where $K$ is a field with involution. Then for all
$1\le p\le r$ and $1\le i\le m$,
\begin{equation}
\label{eq:adj}
  \sum_{|I|=r}\varphi\bigl(\det F[I,:]\bigr)\cdot
  \operatorname{cof}\bigl(F[I,:];\,i,\,p\bigr)
  \;=\;\bigl(\operatorname{adj}(F^{*}F)\,F^{*}\bigr)_{pi},
\end{equation}
where the terms with $i\notin I$ are read as zero.
\end{lemma}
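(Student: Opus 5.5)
Fix $p$ and $i$. The plan is to compute the right-hand side $\bigl(\operatorname{adj}(F^{*}F)F^{*}\bigr)_{pi}$ directly, using Cramer's rule for the middle factor and Cauchy--Binet for the determinant that appears. Write $G=F^{*}F\in K^{r\times r}$. Then
\[
\bigl(\operatorname{adj}(G)F^{*}\bigr)_{pi}=\sum_{q}\operatorname{adj}(G)_{pq}(F^{*})_{qi}.
\]
By the Laplace expansion along column $p$, this equals $\det G^{(p)}$, where $G^{(p)}$ is $G$ with its $p$-th column replaced by the $i$-th column of $F^{*}$. This is the column-$p$ cofactor expansion that Cramer's rule expresses, so the identity is formal and holds over any commutative ring; in particular we never need $G$ to be invertible.

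Next I factor $G^{(p)}$ as a product of an $r\times m$ matrix and an $m\times r$ matrix. The $p$-th column of $G$ is $F^{*}F_{\bullet p}$, and the $i$-th column of $F^{*}$ is $F^{*}e_i$. Hence
\[
G^{(p)}=F^{*}F_{p\leftarrow e_i},
\]
where $F_{p\leftarrow e_i}$ denotes $F$ with its $p$-th column replaced by the standard basis vector $e_i\in K^{m}$. Cauchy--Binet then gives
\[
\det G^{(p)}=\sum_{|I|=r}\det\bigl(F^{*}[:,I]\bigr)\,\det\bigl(F_{p\leftarrow e_i}[I,:]\bigr).
\]

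It remains to identify the two factors in each term. For the first factor, $F^{*}[:,I]=(F[I,:])^{*}$. Since $\varphi$ is a field automorphism it commutes with determinants, and transposition does not change them, so $\det F^{*}[:,I]=\varphi(\det F[I,:])$. This is precisely where the involution lands on the minor rather than on the cofactor. For the second factor, if $i\notin I$ then the $p$-th column of $F_{p\leftarrow e_i}[I,:]$ is zero, so the determinant vanishes. This matches the convention that terms with $i\notin I$ are read as zero. If $i\in I$, expanding along column $p$ gives exactly the cofactor of the entry $(i,p)$ in $F[I,:]$, namely $\operatorname{cof}(F[I,:];i,p)$, with the sign determined by the position of $i$ within $I$ and by $p$. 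Summing over $I$ yields \eqref{eq:adj}.

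The only delicate step is the sign and index bookkeeping. We must check that the cofactor convention "$\operatorname{cof}(F[I,:];i,p)$", with $i$ labelling a row of $F[I,:]$ by its position in $I$, agrees with the sign produced by the Laplace expansion of $\det F_{p\leftarrow e_i}[I,:]$. We must also check that the adjugate indexing $\operatorname{adj}(G)_{pq}=\operatorname{cof}(G;q,p)$ gives the column-$p$ replacement rather than row-$p$. I would verify both on the case $r=1$ and on a $2\times2$ example before writing the general argument.
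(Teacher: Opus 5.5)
Your proof is correct and follows essentially the paper's argument run in the opposite direction: the paper starts from the left side, uses Cramer's rule on each $F[I,:]$ to write the cofactor as $\det F^{(p\leftarrow e_i)}[I,:]$, and recognizes the Cauchy--Binet expansion of $\det(F^{*}F^{(p\leftarrow e_i)})$ via $\det F^{*}[:,I]=\varphi(\det F[I,:])$; it then finishes with $F^{*}F^{(p\leftarrow e_i)}=(F^{*}F)^{(p\leftarrow F^{*}e_i)}$ and Cramer's rule again, which are exactly your ingredients. Your sign bookkeeping is already right as written, because the column-$p$ expansion of $\det F_{p\leftarrow e_i}[I,:]$ gives $(-1)^{k+p}$ times the complementary minor, with $k$ the position of $i$ in $I$, and that is the cofactor of $F[I,:]$ at $(i,p)$; so the check on examples is not needed.
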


\begin{proof}
Fix $p$ and $i$, let $v:=e_{i}\in K^{m}$, and for an $r$-subset $I$ let
$v_{I}\in K^{r}$ be the subvector of $v$ on the rows $I$. Then $v_{I}=0$ when
$i\notin I$, and otherwise $v_{I}$ is the $k$-th standard vector, $k$ the position
of $i$ in $I$, so that
$\operatorname{cof}(F[I,:];i,p)=\bigl(\operatorname{adj}(F[I,:])\,v_{I}\bigr)_{p}$.
Write $M^{(p\leftarrow w)}$ for $M$ with its $p$-th column replaced by $w$.
Cramer's rule in the form
$(\operatorname{adj}(M)\,w)_{p}=\det M^{(p\leftarrow w)}$ turns the left side of
\eqref{eq:adj} into
\[
  \sum_{|I|=r}\varphi\bigl(\det F[I,:]\bigr)\,
  \det\bigl(F[I,:]^{(p\leftarrow v_{I})}\bigr)
  \;=\;\sum_{|I|=r}\varphi\bigl(\det F[I,:]\bigr)\,
  \det\bigl(F^{(p\leftarrow v)}[I,:]\bigr),
\]
deleting rows and replacing a column being operations that commute. Since
$\det\bigl(F^{*}[:,I]\bigr)=\varphi\bigl(\det F[I,:]\bigr)$, that sum is the
Cauchy--Binet expansion of $\det\bigl(F^{*}F^{(p\leftarrow v)}\bigr)$ along the
$r$-subsets of rows.
Finally the $q$-th column of $F^{*}F$ is $F^{*}f_{q}$, whence
$F^{*}F^{(p\leftarrow v)}=(F^{*}F)^{(p\leftarrow F^{*}v)}$, and Cramer's rule
again gives
$\det\bigl((F^{*}F)^{(p\leftarrow F^{*}v)}\bigr)
=\bigl(\operatorname{adj}(F^{*}F)F^{*}v\bigr)_{p}$, which is the right side.
\end{proof}

Nothing in Lemma~\ref{lem:adj} requires $F^{*}F$ to be invertible, and nothing in
it requires the field to be a Moore--Penrose field. Both enter at the next step,
and only there.

\begin{proposition}
\label{prop:closedform}
Let $(K,\varphi)$ be a Moore--Penrose field and let $A\in K^{m\times n}$ have rank
$r$. Then $S(A)\neq0$ and
\begin{equation}
\label{eq:closedentry}
  S(A)\cdot\bigl(\pinv{A}\bigr)_{ji}
  \;=\;\sum_{I\ni i}\ \sum_{J\ni j}
      \varphi\bigl(\det A[I,J]\bigr)\cdot
      \operatorname{cof}\bigl(A[I,J];\,i,\,j\bigr),
\end{equation}
which is \eqref{eq:closedid} when $\varphi$ is the identity.
\end{proposition}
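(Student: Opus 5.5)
Our plan is to use a full-rank factorization and reduce everything to Lemma~\ref{lem:adj}, applied once on each side. Since $\rank(A)=r$, write $A=FG$ with $F\in K^{m\times r}$ and $G\in K^{r\times n}$ of rank $r$. For instance, take $F$ to be $r$ independent columns of $A$ and $G$ the coefficient matrix. The case $r=0$ is trivial, with $S(A)=1$ by the empty-minor convention and $\pinv{A}=0$, so assume $r\ge1$.

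\emph{Step 1: invertibility.} Cauchy--Binet gives $\det(F^{*}F)=\sum_{|I|=r}\varphi(\det F[I,:])\det F[I,:]=S(F)$. Anisotropy (Theorem~\ref{thm:mpfield}, Lemma~\ref{lem:anisotropy}) makes $S(F)\neq0$, because $F$ has some nonzero maximal minor. The same holds for $\det(GG^{*})=S(G)$. Also $\det A[I,J]=\det F[I,:]\det G[:,J]$ by Cauchy--Binet for $r\times r$ submatrices. Hence $S(A)=S(F)S(G)\neq0$, since $\varphi$ is multiplicative.

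\emph{Step 2: the pseudoinverse.} We check directly the four Penrose conditions for
\[
  X=G^{*}(GG^{*})^{-1}(F^{*}F)^{-1}F^{*}.
\]
This is routine: $AX=F(F^{*}F)^{-1}F^{*}$ and $XA=G^{*}(GG^{*})^{-1}G$ are self-adjoint idempotents. By uniqueness, $\pinv{A}=X$. Writing inverses through adjugates gives
\[
  S(A)\,\pinv{A}=\bigl(G^{*}\operatorname{adj}(GG^{*})\bigr)\bigl(\operatorname{adj}(F^{*}F)F^{*}\bigr).
\]

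\emph{Step 3: matching the right side of \eqref{eq:closedentry}.} We factor each cofactor. Let $i\in I$ and $j\in J$, with $k$ the position of $i$ in $I$ and $l$ the position of $j$ in $J$. Then the cofactor of $A[I,J]=F[I,:]G[:,J]$ at $(k,l)$ is the $(l,k)$ entry of $\operatorname{adj}(G[:,J])\operatorname{adj}(F[I,:])$, since adjugate is anti-multiplicative. Expanding,
\[
  \operatorname{cof}(A[I,J];i,j)=\sum_{p=1}^{r}\operatorname{cof}(G[:,J];p,j)\,\operatorname{cof}(F[I,:];i,p).
\]
Combining this with $\varphi(\det A[I,J])=\varphi(\det F[I,:])\varphi(\det G[:,J])$, the double sum separates. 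It becomes
\[
  \sum_{p}\Bigl[\sum_{J\ni j}\varphi(\det G[:,J])\operatorname{cof}(G[:,J];p,j)\Bigr]\Bigl[\sum_{I\ni i}\varphi(\det F[I,:])\operatorname{cof}(F[I,:];i,p)\Bigr].
\]
The second bracket is $(\operatorname{adj}(F^{*}F)F^{*})_{pi}$ by Lemma~\ref{lem:adj}. The first bracket is handled by applying Lemma~\ref{lem:adj} to $G^{*}\in K^{n\times r}$ and then applying $\varphi$ and transposing; it equals $(G^{*}\operatorname{adj}(GG^{*}))_{jp}$. One uses here that $\varphi$ commutes with determinants and cofactors, and that $\operatorname{adj}(M^{*})=\operatorname{adj}(M)^{*}$. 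Summing over $p$ gives $\bigl(S(A)\pinv{A}\bigr)_{ji}$.

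The main obstacle is the bookkeeping in Step 3. Two things must be tracked carefully: the index transposition in the adjugate product, and the conjugation step needed to apply Lemma~\ref{lem:adj} on the $G$ side. Together these confirm that $\varphi$ lands on the minor and not on the cofactor.
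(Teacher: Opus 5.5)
Your proposal is correct and follows the paper's proof: a full-rank factorization $A=FG$, $\det(F^{*}F)=S(F)$ and $\det(GG^{*})=S(G)$ nonzero by anisotropy, $S(A)=S(F)S(G)$, the explicit $\pinv{A}=G^{*}(GG^{*})^{-1}(F^{*}F)^{-1}F^{*}$, anti-multiplicativity of the adjugate, and Lemma~\ref{lem:adj} applied to each factor. The differences are cosmetic. The paper first proves the full-column-rank and full-row-rank closed forms, the latter ``by taking adjoints'', which is exactly your application of the lemma to $G^{*}$ followed by $\varphi$ and transposition. You also handle the trivial case $r=0$ explicitly.
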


\begin{proof}
Let first $A=F$ have full column rank $r$, so that $J=\{1,\dots,r\}$ is the only
admissible column set. By Cauchy--Binet as above,
$\det(F^{*}F)=\sum_{|I|=r}\varphi(\det F[I,:])\det F[I,:]=S(F)$, which is nonzero
by Lemma~\ref{lem:anisotropy} applied to the vector of maximal minors of $F$;
this is the one appeal to anisotropy in the argument. Hence $F^{*}F$ is invertible
and a direct check of \eqref{eq:P1}--\eqref{eq:P4} gives $\pinv{F}=(F^{*}F)^{-1}F^{*}$, so
that $\operatorname{adj}(F^{*}F)F^{*}=S(F)\pinv{F}$ and \eqref{eq:adj} is
\eqref{eq:closedentry}. Taking adjoints, the same identity holds when $A$ has full
row rank.

For general $r$ write $A=BC$ with $B\in K^{m\times r}$ of full column rank and
$C\in K^{r\times n}$ of full row rank. As $B^{*}B$ and $CC^{*}$ are invertible by
the previous paragraph, one checks \eqref{eq:P1}--\eqref{eq:P4} for
$C^{*}(CC^{*})^{-1}(B^{*}B)^{-1}B^{*}$ directly, since $AX=B(B^{*}B)^{-1}B^{*}$ and
$XA=C^{*}(CC^{*})^{-1}C$ are self-adjoint by construction, and therefore
$\pinv{A}=\pinv{C}\,\pinv{B}$. Now $A[I,J]=B[I,:]\,C[:,J]$ is a product of two
$r\times r$ matrices, so
\begin{equation}
\label{eq:factorminor}
  \det A[I,J]=\det B[I,:]\cdot\det C[:,J],
  \qquad
  \operatorname{adj}A[I,J]=\operatorname{adj}C[:,J]\cdot\operatorname{adj}B[I,:],
\end{equation}
the second being the anti-multiplicativity of the adjugate, an identity of
polynomials in the entries and hence valid over any commutative ring. The first
identity in \eqref{eq:factorminor} makes $S$ multiplicative on the factorization,
$S(A)=S(B)S(C)$, since the double sum \eqref{eq:Sdef} separates. The second turns
the product of the two full-rank formulas into the double sum
\eqref{eq:closedentry}: reading off the $(j,i)$ entry of
$S(A)\pinv{A}=\bigl[S(C)\pinv{C}\bigr]\bigl[S(B)\pinv{B}\bigr]$ and expanding each
factor by the full-rank case gives
\[
  S(A)\cdot\bigl(\pinv{A}\bigr)_{ji}
  =\sum_{I\ni i}\sum_{J\ni j}
   \varphi\bigl(\det B[I,:]\bigr)\varphi\bigl(\det C[:,J]\bigr)
   \sum_{p=1}^{r}\operatorname{cof}\bigl(C[:,J];p,j\bigr)
                 \operatorname{cof}\bigl(B[I,:];i,p\bigr),
\]
and the inner sum is the $(j,i)$ entry of
$\operatorname{adj}C[:,J]\cdot\operatorname{adj}B[I,:]$, that is
$\operatorname{cof}(A[I,J];i,j)$.
\end{proof}

We record two features of \eqref{eq:closedentry} before we use it.

First, the criterion for $\pinv{A}$ to exist is our standing hypothesis, that
$(K,\varphi)$ is a Moore--Penrose field, written in a different way. Over a field
every nonzero element is invertible, so the condition in Theorem~\ref{thm:bbp} and
Proposition~\ref{prop:closedform} is just $S(A)\neq0$. And Lemma~\ref{lem:anisotropy}, applied to the vector of maximal
minors instead of to a column, gives that $S(A)\neq0$ for every $A$ of rank $r$ if
and only if $(K,\varphi)$ is a Moore--Penrose field: one implication is anisotropy
applied to the minors, and for the other one an isotropic vector
$(a_{1},\dots,a_{s})$ is a rank-one row matrix whose maximal minors are the
$a_{k}$, so $S$ vanishes on it, as Example~\ref{ex:isotropic} shows. So we have one
single condition here, and not three: the
one that makes $\pinv{A}$ exist, the one that fixes $\dim A\{1,3,4\}$ and the one
that makes the denominator in \eqref{eq:closedentry} nonzero are the same
condition.

Second, \eqref{eq:closedentry} gives us an \emph{intrinsic common denominator}
for $\pinv{A}$. Intrinsic means that $S(A)$ clears the denominators of all the
entries and depends only on the maximal minors of $A$, not on any elimination or
factorization along the way. It does not have to be the reduced denominator, as
Remark~\ref{rem:tight} shows, only a multiple of it. In
Section~\ref{sec:degrees} it becomes a degree bound, and in
Corollary~\ref{thm:excep} an exact description of the exceptional set.

From here on $(K,\varphi)$ is a Moore--Penrose field and
$P=K[u_{1},\dots,u_{\ell}]$. The parametric field is $\Kat$, with $\varphi$
extended to $\varphi_{e}$, and it is again a Moore--Penrose field by
Theorem~\ref{thm:mpfield}. We recall two things about this extension. First,
$\varphi_{e}$ acts on the coefficients and fixes the variables, so it preserves
degrees. Second, the admissible parameter values of
Theorem~\ref{thm:specialization} are the ones in $K_{\varphi}^{\,\ell}$, where
the fixed field is the one of the \emph{ground} field $K$, and not the one of
$\Kat$. We take $A\in P^{m\times n}$ with entries of total degree at most
$\delta$ and of rank $r\ge1$ over $\Kat$, and we exclude the trivial case $A=0$.
A reader who prefers a concrete setting can take $(K,\varphi)=(\R,\mathrm{id})$
throughout, with the real points as admissible values.

Asking the entries to be \emph{polynomial} is only a normalization, and we lose no
generality with it. Take any $A\in\Kat^{m\times n}$ and put
$q=\operatorname{den}(A)$ and $\widetilde{A}=qA$. The two matrices differ by a
nonzero scalar of $\Kat$, so they have the same rank, and the four conditions give
$\pinv{(cA)}=c^{-1}\pinv{A}$ for every nonzero $c\in\Kat$. Therefore
\[
  \pinv{A}=q\,\pinv{\widetilde{A}},
\]
and clearing denominators once transfers every statement below to $A$: if the
entries of $\widetilde{A}$ have degree at most $\widetilde{\delta}$, then
$\deg\num(\pinv{A})\le\deg q+(2r-1)\widetilde{\delta}$ and
$\deg\den(\pinv{A})\le2r\widetilde{\delta}$. From now on we work with
$\widetilde{A}$ and we drop the tilde.

Bounds of this kind are not new. One of them follows from Kalman's full-rank
factorization
$\pinv{A}=C^{*}(B^{*}AC^{*})^{-1}B^{*}$ \cite[Thm.~3]{Kalman1976}, which is
recalled as \cite[Thm.~2 and Rem.~1]{SendraSendra2017} and observed there,
although not stated, in \cite[\S4]{SendraSendra2017}. In total degree this route
gives $\deg\num\le(3r-1)\delta$ and $\deg\den\le3r\delta$.

There is a second route over $\R$ with the transpose, the Leverrier--Faddeev
algorithm of \cite{KarampetakisLAA1997}. Its denominator is $a_{k}$, the last
nonvanishing coefficient of the characteristic polynomial of $AA^{\mathsf T}$.
That index is the rank, and by Cauchy--Binet $a_{r}=S(A)$, so both denominators
are the same polynomial and a degree count gives $(2r-1)\delta$ and $2r\delta$
again. The bounds stated in \cite{VologiannidisKarampetakis2004} are that same
bound in its worst case, given per variable and with the number of rows in place
of the rank, because an interpolation algorithm does not know the rank in
advance.

So Theorem~\ref{thm:bound} does not contribute a sharper constant. What it
contributes is an explicit statement, in total degree and over an arbitrary
Moore--Penrose field, of a bound that the case of the identity involution leaves
implicit, and, through $S(A)$, the description of the exceptional set in
Corollary~\ref{thm:excep}. It does improve the two bounds above, the denominator
one by a factor $3/2$ and the numerator one by $(3r-1)/(2r-1)$, and
Remark~\ref{rem:tight} reports that the second one is attained in every instance
that we tested. But what matters here is not sharpness, it is canonicity:
$\det(B^{*}AC^{*})$ depends on the blocks that we choose and it can vanish at
points where the rank of $A$ does not drop, while $S(A)$ is determined by $A$ and
vanishes exactly on the rank-drop locus. It is the second one that
Corollary~\ref{thm:excep} needs.

\begin{theorem}[degree bound]
\label{thm:bound}
$S(A)\pinv{A}$ is a polynomial matrix, and
\[
  \deg S(A)\le 2r\delta,
  \qquad
  \deg\bigl(S(A)\pinv{A}\bigr)\le(2r-1)\delta .
\]
In particular $\operatorname{den}(\pinv{A})$ divides $S(A)$, so the numerator of
$\pinv{A}$ has degree at most $(2r-1)\delta$ and its denominator has degree at
most $2r\delta$.
\end{theorem}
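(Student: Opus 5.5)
The plan is to read everything off Proposition~\ref{prop:closedform}, applied over the parametric Moore--Penrose field $\Kat$ with the extended involution $\varphi_{e}$, and then do a degree count on each side of \eqref{eq:closedentry}. Since $A\in P^{m\times n}$, every $r\times r$ minor $\det A[I,J]$ is a polynomial in $P$. As a sum of $r$-fold products of entries, it has total degree at most $r\delta$. Because $\varphi_{e}$ acts only on coefficients and fixes the variables, $\varphi_{e}(\det A[I,J])$ is again a polynomial of degree at most $r\delta$. Hence each term of \eqref{eq:Sdef} has degree at most $2r\delta$, and so does the sum $S(A)$. Proposition~\ref{prop:closedform} guarantees $S(A)\neq0$, so $S(A)$ is a genuine nonzero polynomial; this will be needed for the divisibility claim.

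For the numerator, I would use the right-hand side of \eqref{eq:closedentry}. The cofactor $\operatorname{cof}(A[I,J];i,j)$ is, up to sign, an $(r-1)\times(r-1)$ minor of $A$. It is therefore a polynomial of degree at most $(r-1)\delta$, and for $r=1$ it is the constant $1$. Multiplying by $\varphi_{e}(\det A[I,J])$, of degree at most $r\delta$, gives polynomial terms of degree at most $(2r-1)\delta$. So each entry $S(A)(\pinv{A})_{ji}$ is a finite sum of such polynomials, which shows that $S(A)\pinv{A}\in P^{n\times m}$ with $\deg(S(A)\pinv{A})\le(2r-1)\delta$.

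For the \emph{in particular} part, write $G=S(A)\pinv{A}$, a polynomial matrix. Each entry of $\pinv{A}$ equals $G_{ji}/S(A)$, so after reduction its denominator divides $S(A)$, because $P$ is a UFD. The lcm of divisors of $S(A)$ again divides $S(A)$, hence $\den(\pinv{A})\mid S(A)$ and $\deg\den(\pinv{A})\le2r\delta$.

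For the numerator, put $c=S(A)/\den(\pinv{A})\in P$. Then
\[
\num(\pinv{A})=\den(\pinv{A})\,\pinv{A}=G/c,
\]
which is a polynomial matrix because $\den(\pinv{A})$ clears all denominators. Since $G=c\cdot\num(\pinv{A})$ and degrees add in the domain $P$, we get $\deg\num(\pinv{A})=\deg G-\deg c\le(2r-1)\delta$; entries that are zero impose no condition.

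I do not expect a real obstacle, since the hard work is already in Proposition~\ref{prop:closedform}. The only points requiring care are two. First, one must check that $\varphi_{e}$ does not raise degrees; this holds because it fixes each $u_{k}$ and so preserves the degree of every monomial. Second, one must make sure the numerator bound passes from $G$ to the reduced numerator. That step is the divisibility argument above, which relies on unique factorization in $P$.
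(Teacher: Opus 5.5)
Your proof is correct and follows the paper's own argument: apply Proposition~\ref{prop:closedform} over $\Kat$, then bound the degrees of the minors and cofactors in \eqref{eq:Sdef} and \eqref{eq:closedentry}, using that $\varphi_{e}$ fixes the variables. Your unique-factorization derivation of the \emph{in particular} clause, both the divisibility $\den(\pinv{A})\mid S(A)$ and the transfer of the degree bound from $S(A)\pinv{A}$ to $\num(\pinv{A})$, spells out a step the paper leaves implicit.
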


\begin{proof}
We apply Proposition~\ref{prop:closedform} over the field $\Kat$ itself, where
$S(A)\neq0$ and therefore $S(A)$ is invertible. This is the ring that we have to
take, because over $P$ the polynomial $S(A)$ is not constant, so it is not
invertible either, and Theorem~\ref{thm:bbp} would give us nothing. Identity
\eqref{eq:closedentry} then holds with entries in $\Kat$, and its right-hand side
is polynomial in the entries of $A$. So $S(A)\pinv{A}$ is a polynomial matrix.

Now the degrees. Each $\det A[I,J]$ is a sum of products of $r$ entries of $A$, so
it has degree at most $r\delta$. And $\varphi_{e}$ fixes the variables, so
$\varphi_{e}(\det A[I,J])$ has the same degree. Then every summand of
\eqref{eq:Sdef} has degree at most $2r\delta$. In \eqref{eq:closedentry} each
summand is one of these minors times a cofactor of an $r\times r$ submatrix, and
such a cofactor has degree at most $(r-1)\delta$, so the summand has degree at
most $(2r-1)\delta$.
\end{proof}

Neither of the two bounds involves $m$ or $n$ explicitly, so $\delta$ and $r$
alone control the degree of each entry of $\pinv{A}$. The order of the matrix
enters only through $r\le\min(m,n)$ and through the number of entries.

\begin{corollary}[the exceptional set, computed from $A$]
\label{thm:excep}
Among the admissible parameter values, that is for
$\alpha\in K_{\varphi}^{\,\ell}$,
\begin{equation}
\label{eq:excep}
  \underbrace{\mathcal{E}(A)\cap K_{\varphi}^{\,\ell}}_{\text{specialization fails}}
  \;=\;\underbrace{\{\alpha\in K_{\varphi}^{\,\ell}:S(A)(\alpha)=0\}}_{\text{(a)}}
  \;=\;\underbrace{\{\alpha\in K_{\varphi}^{\,\ell}:\rank A|_{u=\alpha}<r\}}_{\text{(b)}}.
\end{equation}
\end{corollary}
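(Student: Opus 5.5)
The plan is to prove the two equalities in \eqref{eq:excep} separately. Throughout, $A\in P^{m\times n}$ is the polynomial normalization fixed before Theorem~\ref{thm:bound}, so $\den(A)=1$. Hence, for admissible $\alpha$, condition \eqref{eq:speccond} fails exactly when $\den(\pinv{A})(\alpha)=0$. The one fact used repeatedly is that evaluation at $\alpha\in K_{\varphi}^{\,\ell}$ commutes with $\varphi_{e}$ and with taking minors. Consequently $S(A)(\alpha)=S\bigl(A|_{u=\alpha}\bigr)$, where the right-hand side is the same sum \eqref{eq:Sdef} over the $r\times r$ minors, now computed over $K$.

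\emph{(a) $=$ (b).} Specialization cannot raise the rank, so $\rank A|_{u=\alpha}\le r$.
\begin{itemize}
\item If the rank drops, every $r\times r$ minor of $A|_{u=\alpha}$ vanishes, and so does $S(A)(\alpha)$.
\item If the rank equals $r$, the $r\times r$ minors of $A|_{u=\alpha}$ are its maximal minors. Proposition~\ref{prop:closedform}, applied over the Moore--Penrose field $(K,\varphi)$, gives $S\bigl(A|_{u=\alpha}\bigr)\neq0$.
\end{itemize}

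\emph{$\mathcal{E}(A)\cap K_{\varphi}^{\,\ell}\subseteq$ (a).} This is immediate from Theorem~\ref{thm:bound}. Since $\den(\pinv{A})$ divides $S(A)$ in $P$, a zero of the denominator at $\alpha$ is a zero of $S(A)$.

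\emph{(a) $\subseteq\mathcal{E}(A)$.} This is the substantive direction, and I expect it to be the main obstacle. The divisibility $\den(\pinv{A})\mid S(A)$ runs the wrong way here, and $S(A)$ may carry extra factors (Remark~\ref{rem:tight}). So I will argue by contradiction rather than through divisibility.
\begin{itemize}
\item Assume $S(A)(\alpha)=0$ but $\den(\pinv{A})(\alpha)\neq0$. Then Theorem~\ref{thm:specialization} applies and gives $\pinv{(A|_{u=\alpha})}=\pinv{A}|_{u=\alpha}$.
\item Consider the trace of $A\pinv{A}$. Over $\Kat$ this matrix is idempotent of rank $r$, since its image is $\Ima(A)$. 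The trace of an idempotent equals its rank times $1$, so $\trace(A\pinv{A})=r$, a constant polynomial. Note that $\trace(A\pinv{A})$ is a polynomial, because $\den(\pinv{A})$ does not vanish at $\alpha$ and $A$ is polynomial.
\item Evaluating at $\alpha$ gives $\trace\bigl(A|_{u=\alpha}\,\pinv{(A|_{u=\alpha})}\bigr)=r$.
\item On the other hand, $A|_{u=\alpha}\pinv{(A|_{u=\alpha})}$ is an idempotent over $K$ with image $\Ima(A|_{u=\alpha})$. By (a) $=$ (b), its rank is some $r'<r$, so its trace is $r'\cdot1$.
\item Moore--Penrose fields have characteristic zero \cite[Cor.~5]{SendraSendra2017}. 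Therefore $r'\cdot 1\neq r\cdot 1$ in $K$, a contradiction. Hence $\alpha\in\mathcal{E}(A)$.
\end{itemize}

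The only points needing care are that trace equals rank for idempotents (diagonalize the projector), and that characteristic zero is available, which is exactly where the Moore--Penrose hypothesis enters this direction.
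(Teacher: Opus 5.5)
Your proof is correct and follows the paper's argument step for step: $\den(A)=1$, the inclusion of $\mathcal{E}(A)\cap K_{\varphi}^{\,\ell}$ in (a) from $\den(\pinv{A})\mid S(A)$, and the converse by contradiction through Theorem~\ref{thm:specialization} and the trace of an idempotent, with characteristic zero separating $r$ from $r'$. The differences are cosmetic: you take the trace of $A\pinv{A}$ instead of $\pinv{A}A$, and you get (a) $=$ (b) from the nonvanishing of $S$ in Proposition~\ref{prop:closedform} rather than from Lemma~\ref{lem:anisotropy} directly, which rests on the same fact. One sentence needs rewording: you say $\trace(A\pinv{A})$ ``is a polynomial because $\den(\pinv{A})(\alpha)\neq0$'', but that nonvanishing is really what makes evaluation at $\alpha$ compatible with the product and the trace.
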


So, inside $K_{\varphi}^{\,\ell}$, the exceptional set is the zero set of one
polynomial of degree at most $2r\delta$, by Theorem~\ref{thm:bound}, and that
polynomial is determined by $A$ alone.

The equality of (a) and (b) is elementary and we do not claim it. It is anisotropy
applied to the vector of maximal minors, and over $(\R,\mathrm{id})$ it says that
a sum of real squares vanishes only when all the summands vanish. In one variable
this identification is standard, and it is the one behind the analysis of the
poles of $\pinv{A}$ for rational transfer-function matrices in
\cite{ZhangPoles2015} and of the finite poles of $\pinv{A}$ for polynomial
matrices in \cite{KafetzisKarampetakis2022}.

What Corollary~\ref{thm:excep} adds is the first equality, and that one is a
statement about \emph{specialization}. It does not only say that $\pinv{A}$ has a
pole where the rank drops. It says that the parametric closed form stops
representing $\pinv{\bigl(A|_{u=\alpha}\bigr)}$ at exactly those points, with
$\ell$ parameters and over an arbitrary Moore--Penrose field. This is the
direction that needs the trace argument of the proof below.

\begin{proof}
Since $A$ is polynomial, $\operatorname{den}(A)=1$, and so the part of
$\mathcal{E}(A)$ inside $K_{\varphi}^{\,\ell}$ is the set of zeros of
$\operatorname{den}(\pinv{A})$ there.

We begin with (a) $=$ (b). The involution $\varphi_{e}$ fixes the variables and
$\alpha\in K_{\varphi}^{\,\ell}$, so evaluation commutes with the involution and
$S(A)(\alpha)=\sum\varphi(g_{IJ})\,g_{IJ}$, where
$g_{IJ}=\det A[I,J]|_{u=\alpha}\in K$. By Lemma~\ref{lem:anisotropy} this sum
vanishes if and only if all the $g_{IJ}$ vanish, that is, if and only if the rank
of $A|_{u=\alpha}$ drops below $r$.

We go now to the first equality. One inclusion is Theorem~\ref{thm:bound},
because $\operatorname{den}(\pinv{A})$ divides $S(A)$. For the other one, take
$\alpha\in K_{\varphi}^{\,\ell}$ with $\rank A|_{u=\alpha}=r'<r$ and suppose that
$\operatorname{den}(\pinv{A})(\alpha)\neq0$. Then \eqref{eq:speccond} holds, and
Theorem~\ref{thm:specialization} gives
$\bigl(\pinv{A}\bigr)\big|_{u=\alpha}=\pinv{\bigl(A|_{u=\alpha}\bigr)}$. On the
other hand, $\pinv{A}A$ is idempotent by \eqref{eq:P1} and \eqref{eq:P2} and it
has rank $r$. Over a field of characteristic zero the trace of an idempotent
matrix is its rank, and a Moore--Penrose field has characteristic zero by
\cite[Cor.~5]{SendraSendra2017}, so $\trace(\pinv{A}A)=r$ \emph{identically},
that is, it is a constant rational function. We specialize that constant at
$\alpha$ and we use the equality above,
\[
  r=\trace\bigl(\pinv{A}A\bigr)\big|_{u=\alpha}
   =\trace\Bigl(\pinv{\bigl(A|_{u=\alpha}\bigr)}\,A|_{u=\alpha}\Bigr)=r',
\]
where the last step holds because that matrix is again idempotent, of rank
$\rank A|_{u=\alpha}=r'$. This contradicts $r'<r$.
\end{proof}

\begin{remark}[what the equality does and does not say]
\label{rem:excepequal}
Corollary~\ref{thm:excep} is an equality of zero sets, and not an equality of
polynomials. In general $\operatorname{den}(\pinv{A})$ is a proper divisor of
$S(A)$, because common factors can cancel between numerator and denominator. What
the corollary says is that no cancellation of this kind can remove a zero that
lies in $K_{\varphi}^{\,\ell}$. This matters because
Definition~\ref{def:exceptional} describes $\mathcal{E}(A)$ through $\pinv{A}$,
while Corollary~\ref{thm:excep} determines it from $A$. Cancellation can change
multiplicities, but it cannot change the set of zeros in $K_{\varphi}^{\,\ell}$.
And restricting to the values $\alpha\in K_{\varphi}^{\,\ell}$ costs us nothing,
because these are the admissible ones, and it is what makes the statement
unconditional.
\end{remark}

\begin{example}[Example~\ref{ex:running} again]
\label{ex:runningS}
Take $(K,\varphi)=(\R,\mathrm{id})$. For
$A=\begin{pmatrix}1&u\\u&u^{2}\end{pmatrix}$, of rank one over $\R(u)$ with
$\delta=2$, the maximal minors are the entries, so $S(A)=(1+u^{2})^{2}$ and
\eqref{eq:closedentry} reduces to $\pinv{A}=A^{\mathsf T}/S(A)$, the matrix computed
there. Both bounds are attained: the denominator degree $4$ is $2r\delta$ and the
numerator degree $2$ is $(2r-1)\delta$. Since $1+u^{2}$ has no real zero,
$\mathcal{E}(A)=\emptyset$.
\end{example}

\begin{remark}[sharpness]
\label{rem:tight}
In thirty randomly generated instances, in one and in two parameters and with $r$
up to three, the degree of $S(A)\pinv{A}$ equalled $(2r-1)\delta$ in every case.
We do not prove that the bound is attained for all $(r,\delta,\ell)$. What these
instances show is that, on this family, it cannot be improved as a function of $r$
and $\delta$ alone.

For the denominator there is a criterion, and it is not rank deficiency. Write
$c(A)$ for the greatest common divisor of the maximal minors of $A$. By
\eqref{eq:closedentry} every summand of $S(A)\pinv{A}$ carries a factor
$\det A[I,J]$, so $c(A)$ divides the numerator. And by \eqref{eq:Sdef} the square
$c(A)^{2}$ divides $S(A)$. So one factor $c(A)$ cancels in the reduction to
lowest terms, and $\deg\operatorname{den}(\pinv{A})\le2r\delta-\deg c(A)$. So
coprimality of the maximal minors is \emph{necessary} for the bound to be
attained, and in every instance that we tested it was also sufficient. The failure
has nothing to do with the rank. Take for example
$A=\begin{pmatrix}1&u&u+1&2\\1&u&u+1&2\\u&1&0&u-1\end{pmatrix}$, of rank two with
$\delta=1$. Here $c(A)=u+1$, the polynomial $S(A)=8(u+1)^{2}(u^{2}-2u+2)$ has
degree $4=2r\delta$, and $\operatorname{den}(\pinv{A})=4(u+1)(u^{2}-2u+2)$ has
degree three. The same criterion accounts for the full-rank cells of
Table~\ref{tab:det}, as Section~\ref{ssec:det} notes. When $r=m=n$ the shortfall
is structural: there $\pinv{A}=A^{-1}$, so $\operatorname{den}(\pinv{A})$ divides
$\det A$ and has degree at most $r\delta$, while $S(A)=\det(A)\varphi(\det A)$ can
have degree $2r\delta$.
\end{remark}

Theorem~\ref{thm:bound} bounds the \emph{output}. The reduced entries of
$\pinv{A}$ have degrees that are linear in the entry degree and in the rank, and
that do not depend on the order of the matrix. It says nothing about the
intermediate expressions that Gaussian elimination and the calls to
\texttt{normal} or \texttt{simplify} produce along the way. Their growth is a
different phenomenon, and as Section~\ref{ssec:phases} shows it is the dominant
cost. So we have a bound on the size of the output, in agreement with
Remark~\ref{rem:params-are-coeffs}, and the sweeps over the rank and over the
number of parameters in Section~\ref{ssec:phases} test whether the measured cost
follows those same two parameters.

\section{Algorithms}
\label{sec:algorithms}

By Remark~\ref{cor:twolinearsolves} the three algorithms below are one single
method. We make two affine-linear solves: the first one imposes \eqref{eq:P1},
\eqref{eq:P3} and \eqref{eq:P4} on the $nm$ unknowns, and the second one imposes
\eqref{eq:P2} on the $d$ unknowns that survive. What changes from one to the other is the ring where the
unknowns live and, in the parametric case, which symbols are unknowns at all.

Their roles are different, though. Algorithm~\ref{alg:real} is the method in its
simplest form and we claim no novelty for it, since computing $\pinv{A}$ over $\Q$
is classical; we state it because Algorithm~\ref{alg:param} is a modification of
it and the correctness argument goes through it. Algorithm~\ref{alg:complex}
carries Theorem~\ref{thm:complex}, and without that theorem the complex case is
not a linear system at all; that theorem and the first implementations of the
three algorithms are due to M.\ Coronado, as the acknowledgements record.
Algorithm~\ref{alg:param} is the only one to which Theorem~\ref{thm:bound} and
Corollary~\ref{thm:excep} say anything, since with entries in $\Q$ there is no
parameter to bound or to specialize.

The inputs below are rational, because that is what a computer algebra system
represents exactly, and by Theorem~\ref{thm:mpfield} both $\Q$ and
$\Q(u_{1},\dots,u_{\ell})$ are Moore--Penrose fields for the identity involution,
so the theory of Sections~\ref{sec:theory} and~\ref{sec:degrees} applies to them
word for word. We leave out the fourth combination, rational functions over $\C$
with conjugation: the theory covers it, but there the admissible parameter values
are the ones in $K_{\varphi}^{\,\ell}=\R^{\ell}$, so the specialization results
only apply at real parameters.

\begin{algo}[H]
\caption{$\textsc{MoorePenroseR}(A)$: real entries}
\label{alg:real}
\KwIn{$A\in\Q^{m\times n}$, given exactly.}
\KwOut{$\pinv{A}\in\Q^{n\times m}$.}
\begin{enumerate}\itemsep2pt
  \item $X\leftarrow(x_{ij})$, a matrix of $nm$ indeterminates.
  \item $\mathcal{L}\leftarrow$ entries of $AXA-A$, of $(AX)^{\mathsf T}-AX$ and
        of $(XA)^{\mathsf T}-XA$.
        \hfill{\small[affine linear, \S\ref{ssec:reduction}]}
  \item Solve $\mathcal{L}=0$; substitute the affine solution into $X$ to obtain
        $\widetilde{X}$.
  \item \textbf{if} $\widetilde{X}$ contains no indeterminate
        \textbf{then return} $\widetilde{X}$.
        \hfill{\small[$d=0$; Thm.~\ref{prop:dimVL}]}
  \item $\mathcal{M}\leftarrow$ entries of
        $\widetilde{X}A\widetilde{X}-\widetilde{X}$, expanded.
        \hfill{\small[affine-linear, by Thm.~\ref{thm:residuallinear}]}
  \item Solve $\mathcal{M}=0$, substitute into $\widetilde{X}$ and
        \textbf{return} the result.
\end{enumerate}
\end{algo}

Step~5 needs a word. The entries of $\widetilde{X}A\widetilde{X}-\widetilde{X}$
are formed as products, so as they are written they are quadratic, and the
quadratic terms cancel only after we expand the expression. So an implementation
has to expand before solving, and it should not test the degree of the unexpanded
form. Both systems have coefficients in $\Q$ and we solve them by Gaussian
elimination over $\Q$, so the output is rational. And since the pseudoinverse of
a real matrix is real, we do not need complex unknowns here: $nm$ of them are
enough, instead of the $2nm$ of Algorithm~\ref{alg:complex}.

\begin{algo}[H]
\caption{$\textsc{MoorePenroseC}(A)$: complex entries}
\label{alg:complex}
\KwIn{$A=A_{0}+iB_{0}$ with $A_{0},B_{0}\in\Q^{m\times n}$.}
\KwOut{$\pinv{A}\in\Q(i)^{n\times m}$.}
\begin{enumerate}\itemsep2pt
  \item $X\leftarrow(x_{ij})$, $Y\leftarrow(y_{ij})$, each of $nm$
        indeterminates \emph{over $\Q$}; \quad $Z\leftarrow X+iY$.
  \item $A_{0}\leftarrow\operatorname{coeff}(A,i,0)$,
        \quad $B_{0}\leftarrow\operatorname{coeff}(A,i,1)$.
  \item $\mathcal{L}\leftarrow$ the real and imaginary parts of the entries of
        $AZA-A$, together with the entries of \eqref{eq:C1}--\eqref{eq:C4}.
        \hfill{\small[Thm.~\ref{thm:complex}]}
  \item Solve $\mathcal{L}=0$ \emph{over $\Q$} in the $2nm$ unknowns
        $x_{ij},y_{ij}$; substitute into $Z$ to obtain $\widetilde{Z}$.
  \item \textbf{if} $\widetilde{Z}$ contains no indeterminate
        \textbf{then return} $\widetilde{Z}$.
  \item Expand $\widetilde{Z}A\widetilde{Z}-\widetilde{Z}$, take real and
        imaginary parts, solve the resulting linear system over $\Q$ in the
        surviving unknowns, substitute and \textbf{return}.
\end{enumerate}
\end{algo}

The separation in steps~3, 4 and~6 is not a convenience. The unknowns $x_{ij}$
and $y_{ij}$ have to range over $\Q$, and not over $\Q(i)$, because over $\Q(i)$
the representation $Z=X+iY$ is not unique: the pairs $(X,Y)$ and $(X+iC,\,Y-C)$
give the same $Z$ for every $C$. Besides, the derivation of
\eqref{eq:C1}--\eqref{eq:C4} in Theorem~\ref{thm:complex} uses that the real and
the imaginary part of $AX$ are real. So if we solved over $\Q(i)$ we would
enlarge the solution set in a degenerate direction, and we would lose the
uniqueness that step~6 needs. The $2nm$ real unknowns are the ones that the
splitting produces by itself.

\begin{algo}[H]
\caption{$\textsc{MoorePenroseRat}(A,U)$: rational function entries}
\label{alg:param}
\KwIn{$A\in\Q(u_{1},\dots,u_{\ell})^{m\times n}$ and the parameter list
$U=\{u_{1},\dots,u_{\ell}\}$.}
\KwOut{$\pinv{A}\in\Q(u_{1},\dots,u_{\ell})^{n\times m}$.}
\begin{enumerate}\itemsep2pt
  \item Steps 1--4 of Algorithm~\ref{alg:real}, over
        $\Q(u_{1},\dots,u_{\ell})$.
  \item $\mathcal{M}\leftarrow$ entries of
        $\widetilde{X}A\widetilde{X}-\widetilde{X}$, expanded and normalized;
        \quad $V\leftarrow\operatorname{indets}(\mathcal{M})\setminus U$.
        \hfill{\small[Rem.~\ref{rem:params-are-coeffs}]}
  \item Solve $\mathcal{M}=0$ in $V$, treating $u_{1},\dots,u_{\ell}$ as elements
        of the coefficient field; substitute into $\widetilde{X}$ and
        \textbf{return}.
\end{enumerate}
\end{algo}

\begin{remark}
\label{rem:params-are-coeffs}
We must treat the parameters as elements of the coefficient field, and never as
additional variables of the ideal. Among the variables they change the problem:
the ideal stops being zero-dimensional in the unknowns, and the elimination
degenerates. The difference from Algorithm~\ref{alg:real} is then not the linear
algebra, it is the coefficient arithmetic: Gaussian elimination over
$\Q(u_{1},\dots,u_{\ell})$ works with rational functions whose numerators and
denominators grow and need simplification and normalization. This is why reducing
the number of \emph{variables} from $nm$ to $d$ pays off more in the parametric
setting than in the numeric one, and why the coefficient arithmetic dominates the
cost (Section~\ref{ssec:phases}).
\end{remark}

Examples~\ref{ex:running} and~\ref{ex:runningcont} are a trace of
Algorithm~\ref{alg:param} on a $2\times2$ input over $\R(u)$. Step~1 produces the
line \eqref{eq:runningparam}, where the only surviving indeterminate is $t$, so
$V=\{t\}$. Step~2 expands the outer condition and gives $-t\,Z$, with
$Z=NN^{\mathsf T}$. And step~3 solves $t=0$ over the coefficient field $\R(u)$
and returns $\pinv{A}$.

\begin{proposition}
\label{prop:correctness}
Algorithms~\ref{alg:real}--\ref{alg:param} terminate and return $\pinv{A}$.
\end{proposition}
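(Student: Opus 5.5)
The proof follows the two-stage structure of Remark~\ref{cor:twolinearsolves}. It checks separately that each stage is a finite affine-linear solve over the appropriate field and that the final output lies in $A\{1,2,3,4\}$. Uniqueness of $\pinv{A}$ then identifies the output. Termination is immediate once both systems are affine-linear with coefficients in a field whose arithmetic is exact: $\Q$, or $\Q(u_{1},\dots,u_{\ell})$ with normalized rational functions. Gaussian elimination on finitely many equations in finitely many unknowns then stops after finitely many field operations. The substance is correctness.

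For Algorithm~\ref{alg:real} I work over $(\Q,\mathrm{id})$, a Moore--Penrose field by Theorem~\ref{thm:mpfield}.

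1. By the degree count of Section~\ref{ssec:reduction}, $\mathcal{L}$ is affine-linear. Its solution set is $A\{1,3,4\}$, which by Theorem~\ref{prop:dimVL} is the nonempty coset $\pinv{A}+\{NEM^{*}\}$ of dimension $d$. So step~3 returns a parametrization $\widetilde{X}$ of exactly this set, with $d$ free indeterminates.
2. If $d=0$, step~4 returns $\widetilde{X}=\pinv{A}$, again by Theorem~\ref{prop:dimVL}.
3. Otherwise Theorem~\ref{thm:residuallinear} shows that, after expansion, $\mathcal{M}$ equals the entries of $\pinv{A}-\widetilde{X}$. This is affine-linear in the free unknowns, and on $A\{1,3,4\}$ it vanishes only at $\pinv{A}$.
4. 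Solving $\mathcal{M}=0$ therefore has the unique solution from Corollary~\ref{rem:zerodim}, and back-substitution gives $\pinv{A}$.

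Here I would stress that the parametrization obtained in step~3 need not be $E\mapsto\pinv{A}+NEM^{*}$ itself. It only needs to be an affine bijection onto the same set, and that suffices because the identity \eqref{eq:residualidentity} holds pointwise on $A\{1,3,4\}$.

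Algorithm~\ref{alg:param} is the same argument verbatim over $\Q(u_{1},\dots,u_{\ell})$ with the identity involution, a Moore--Penrose field by Theorem~\ref{thm:mpfield}. The only extra point is step~2, where $V=\operatorname{indets}(\mathcal{M})\setminus U$ must be exactly the set of free unknowns left by step~1. Two facts guarantee this. First, the parameters are treated as field elements, as in Remark~\ref{rem:params-are-coeffs}, so the linear solve in step~1 is over the field and its free unknowns are indeterminates distinct from $U$. Second, after expansion and normalization, any free unknown missing from $\mathcal{M}$ would make $\mathcal{M}=0$ non-unique. That contradicts Theorem~\ref{thm:residuallinear}, since $\pinv{A}-\widetilde{X}$ is injective in the unknowns.

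The main obstacle is Algorithm~\ref{alg:complex}, because the involution is not the identity and the unknowns range over $\Q$ rather than $\Q(i)$. Here the relevant Moore--Penrose field is $(\Q(i),\overline{\phantom{a}})$. Anisotropy holds because $\sum a_{k}\overline{a_{k}}$ is a sum of rational squares, so Theorem~\ref{thm:mpfield} applies. I would argue as follows.

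1. Every $Z\in\Q(i)^{n\times m}$ is $X+iY$ with $X,Y\in\Q^{n\times m}$ unique.
2. $AZA=A$ is equivalent to the vanishing of the real and imaginary parts of its entries.
3. By Theorem~\ref{thm:complex}, \eqref{eq:P3} and \eqref{eq:P4} are equivalent to \eqref{eq:C1}--\eqref{eq:C4}.
4. Hence the $\Q$-solutions of $\mathcal{L}$ correspond bijectively to $A\{1,3,4\}$, and this is a $\Q$-affine set because the semilinear maps are additive (Remark~\ref{cor:twolinearsolves}).

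Step~6 then imposes the real and imaginary parts of $\widetilde{Z}A\widetilde{Z}-\widetilde{Z}=\pinv{A}-\widetilde{Z}$. This is $\Q$-affine in the surviving real unknowns, and it vanishes exactly when $\widetilde{Z}=\pinv{A}$. The unique solution is therefore returned, lying in $\Q(i)^{n\times m}$ because the coefficients are in $\Q$.
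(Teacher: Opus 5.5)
Your proof is correct and follows the paper's own argument. Existence of $\pinv{A}$ makes both systems consistent, the first solve parametrizes $A\{1,3,4\}$ (Theorem~\ref{prop:dimVL} justifying the early exit), Theorem~\ref{thm:residuallinear} makes the second system affine-linear with unique solution $\pinv{A}$, the complex case goes through the real--imaginary splitting and Theorem~\ref{thm:complex}, and the parametric case only needs the parameters kept in the coefficient field; you also spell out details the paper leaves implicit (termination, the anisotropy check for $(\Q(i),\overline{\phantom{a}})$, and why $V$ is exactly the set of free unknowns).
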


\begin{proof}
By Theorem~\ref{thm:mpfield} and Corollary~\ref{cor:Ruexists}, $\pinv{A}$ exists
and lies in $K^{n\times m}$ for each of the three input types, so every system
solved below is consistent. Take Algorithm~\ref{alg:real} first. Its step~3
solves a consistent affine-linear system and returns a parametrization of
$A\{1,3,4\}\ni\pinv{A}$. If no indeterminate survives then
$d=0$ and $A\{1,3,4\}=\{\pinv{A}\}$ by Theorem~\ref{prop:dimVL}, so the early
return is correct. Otherwise the expanded entries of
$\widetilde{X}A\widetilde{X}-\widetilde{X}$ are affine-linear and vanish exactly
at $\pinv{A}$ by Theorem~\ref{thm:residuallinear}, so the second system is
consistent with a unique solution.

In Algorithm~\ref{alg:complex} the unknowns range over $\Q$, so a complex
equation vanishes exactly when its real and imaginary parts vanish. Step~3
imposes those parts for \eqref{eq:P1}, and Theorem~\ref{thm:complex} says that
\eqref{eq:C1}--\eqref{eq:C4} are equivalent to \eqref{eq:P3}--\eqref{eq:P4} for
real $X$ and $Y$. So the solution set of the system of step~3 is $A\{1,3,4\}$
written in the real coordinates, and step~6 imposes \eqref{eq:P2} on it as in the
real case. In Algorithm~\ref{alg:param} the only new point is that the parameters
are not among the unknowns that we solve for, so the second system is again a
linear system over the coefficient field $\Q(u_{1},\dots,u_{\ell})$, with the
same unique solution.
\end{proof}

So what we have to count is two linear solves over $K$, of sizes $nm$ and $d$.
Over $\Q$ this is polynomial in the order, and it is dominated by the coefficient
growth of exact Gaussian elimination: a matrix of full rank costs one linear
solve, however large it is, and a matrix of fixed low rank has $d$ growing
quadratically in the order. Over $K=\Q(u_{1},\dots,u_{\ell})$ the count in field
operations is the same, but now a field operation is a gcd and a normalization of
multivariate rational functions whose size grows with the degrees of the entries.
So the cost model that we use is the number of field operations times the cost of
one of them, and it is the second factor that makes the parametric regime
expensive. This is the mechanism of Remark~\ref{rem:params-are-coeffs}.
Section~\ref{sec:degrees} bounds the size of the \emph{answer}, which is a proxy
for the size of the coefficients handled along the way but not a bound on them,
and Section~\ref{sec:cost} measures what it costs.

\section{The cost of the method}
\label{sec:cost}

\subsection{Design}
\label{ssec:design}

All computations used Maple~2025.2 on a 14-core Apple Silicon processor under
macOS, with CPU time measured by \texttt{time()}. Unless we say otherwise, the
matrices of size $m\times(m+1)$ and prescribed generic rank $r$ are $A=FG$, with
$F\in K^{m\times r}$ and $G\in K^{r\times(m+1)}$, integer coefficients sampled
uniformly from $[-10,10]$, and parametric factors of prescribed degree.

The baseline is Maple's \texttt{MatrixInverse(A, method=pseudo)}. On
representative parametric inputs we checked that it satisfies
\eqref{eq:P1}--\eqref{eq:P4} and agrees entry by entry with
Algorithm~\ref{alg:param} and with \eqref{eq:closedentry}, so all the speedups
below compare the same matrix.

The study contains $972$ timed replications: $808$ time the built-in routine
against our method on the same matrix, $70$ pair Algorithm~\ref{alg:param} with
the determinantal formula, $70$ compare three ways of solving the residual system,
and $24$ measure the cost of evaluating the closed form.
The numeric cells use $20$ replications and the parametric ones $10$, except
Tables~\ref{tab:ell} and~\ref{tab:det}, which use five. We time the paired
methods on the same random matrix and with randomized execution order, and we
clear the relevant Maple remember tables before each call. We report the time
limits and the non-completions instead of discarding them, and all the seeds and
the raw timings are in the deposit.

The timings are right-skewed, so the tables report medians and interquartile
ranges. For each paired matrix $A_i$ we use
$s_i=t_{\rm builtin}(A_i)/t_{\rm sym}(A_i)$ and report
$\operatorname{median}(s_i)$, with the wins and the two-sided exact sign test.
Re-timing the same matrices changes the paired speedups by about $10\%$ in median,
and independent samples vary more. One configuration, order $4\times5$ with rank
two, two parameters and factors of degree one, occurs in three of the designs
below on independent samples of ten matrices, with medians $0.105$, $0.228$ and
$0.258$; bootstrapping over $20\,000$ resamples gives overlapping $95$ per cent
intervals, and pooling the thirty matrices gives $0.211$ with interval
$[0.166,0.260]$. So a median of ten carries an interval of roughly a factor of
two, and we only read factors of several as substantive.

\subsection{Numeric and parametric entries}
\label{ssec:results}

With integer entries the built-in routine is uniformly faster. Over the $24$ cells
of the numeric design the paired speedup of the symbolic route goes from $0.50$
down to $0.002$, and the sign test rejects equality at $p<10^{-4}$ in every cell,
so for numeric entries the internal routines are the right tool. Those $480$
replications do verify the theory twice over. The solution space of the first
system had dimension exactly $d=(m-r)(n-r)$ over the coefficient field in the real
regime, as Theorem~\ref{prop:dimVL} requires, and $2d$ real coordinates in the
complex one, which is the count of Remark~\ref{cor:twolinearsolves}. And the
residual system reduced to exactly that many independent linear forms with a
single solution, the signature of Corollary~\ref{rem:zerodim}.

With parametric entries the picture reverses. The parametric cells reproduce and
extend the earlier experiments with the same generator: rank-two matrices of
order $m\times(m+1)$ for $m=4,\dots,10$, with entries rational in three parameters
and ten replications per order. We keep it
unchanged, so that the new orders stay comparable with the earlier ones, but its
structure is not imposed by design and is worth describing in full. Write $\delta$
for the entry degree. Row one is filled with $n$ random polynomials of degree at
most $\delta$, and we draw two more, $f_{1}$ and $f_{2}$, of the same degree,
replaced by $1$ if they come out zero. Then, \emph{for the first $m$ columns
only},
\[
  A_{2j}=\frac{A_{1j}}{f_{1}},
  \qquad
  A_{3j}=\frac{A_{1j}}{f_{1}}\Bigl(1+\frac{1}{f_{2}}\Bigr),
  \qquad 1\le j\le m,
\]
with rows $4,\dots,m$ left at zero. Rows two and three are then proportional, and
the last $n-m$ columns of those rows stay zero while row one does not, so the rank
is exactly two: what produces it is that the column loop stops at $m$ and not at
$n$. The family is therefore degenerate for a reason that has nothing to do with
the mathematics of Sections~\ref{sec:theory} and~\ref{sec:degrees}. It is a
legitimate family of rank-two parametric matrices all the same, and we checked on
the generated matrices that one row is identically zero, that the rank is two, and
that the residual system has total degree one throughout.

We did not assume how much the construction matters, we measured it. At order
$4\times5$, rank two, two parameters and ten matrices per cell, the generic
generator $A=FG$ gives median paired speedups of $0.258$ at factor degree one,
$0.237$ at factor degree five and $0.267$ with rational entries, while the
structured family gives $0.476$ and $2.680$ at the same two degrees. At factor
degree five it is faster on eight matrices of ten, against none of ten for the
generic generator, and Fisher's exact test gives $p=0.0007$. Neither degeneracy
nor high degree is enough on its own, and their combination is, because degeneracy
holds the monomials per entry to six where the generic construction reaches
forty-two, and the size of the coefficients is the binding constraint of
Section~\ref{ssec:phases}. The ten speedups in that cell are trimodal, so the
direction of the finding is secure and its magnitude is not.

\begin{table}[htbp]
\centering
\caption{Structured parametric family: rank-two matrices with entries rational in
three parameters, ten matrices per order. Median CPU time in seconds, median
paired speedup of the symbolic route against the built-in routine with its
interquartile range, wins, sign test and $d$. Conventions as in
Section~\ref{ssec:design}; on the two implementations behind this table and
Table~\ref{tab:phases}, see Remark~\ref{rem:harnesses}.}
\label{tab:param}
\small
\begin{tabular}{lrrrcrlr}
\toprule
Order & Built-in & Symbolic & Speedup & IQR & Wins & $p$ & $d$ \\
\midrule
$4\times5$   &  63.1 &  18.7 & 3.45 & $[2.70,\,4.28]$ & 10/10 & 0.002 &  6 \\
$5\times6$   &  81.7 &  33.7 & 2.55 & $[2.28,\,3.02]$ & 10/10 & 0.002 & 12 \\
$6\times7$   & 114.9 &  45.4 & 2.43 & $[2.12,\,2.72]$ & 10/10 & 0.002 & 20 \\
$7\times8$   & 131.2 &  60.7 & 1.96 & $[1.83,\,2.45]$ & 10/10 & 0.002 & 30 \\
$8\times9$   & 134.5 &  78.6 & 1.80 & $[1.54,\,2.06]$ & 10/10 & 0.002 & 42 \\
$9\times10$  & 155.3 & 100.7 & 1.49 & $[1.35,\,1.73]$ & 10/10 & 0.002 & 56 \\
$10\times11$ & 181.3 & 196.9 & \textbf{0.89} & $[0.87,\,0.96]$ & 0/10 & 0.002 & 72 \\
\midrule
pooled & --- & --- & 1.93 & $[1.48,\,2.51]$ & 60/70 & $8\times10^{-10}$ & --- \\
\bottomrule
\end{tabular}
\end{table}

\begin{figure}[htbp]
\centering
\includegraphics[width=0.72\linewidth]{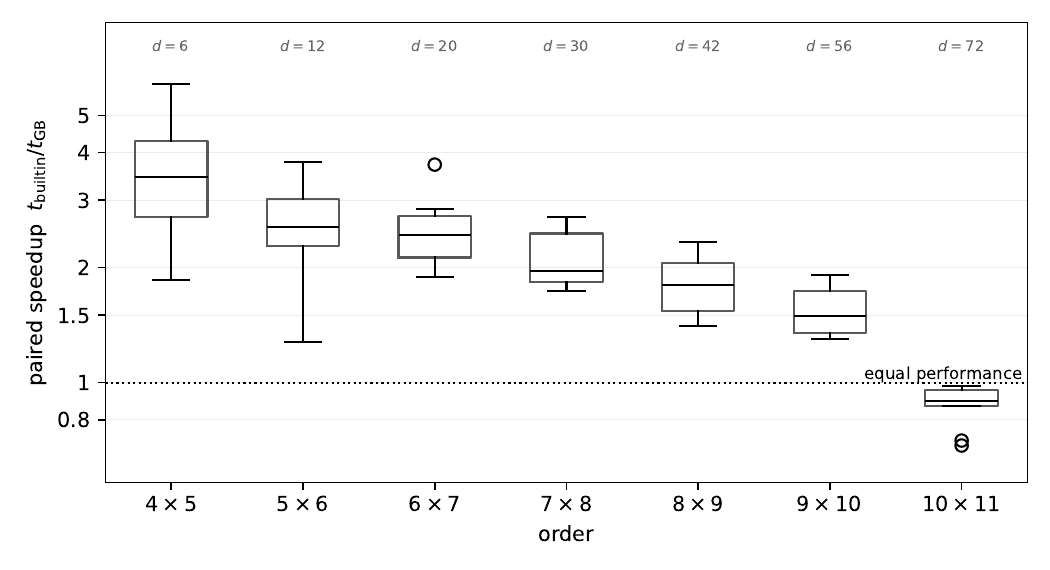}
\caption{Distribution of the per-matrix paired speedup on the structured parametric
family, ten matrices per order, logarithmic scale. The dotted line is equal
performance.}
\label{fig:cruce}
\end{figure}

Every replication passed the verification of the four Penrose equations, and none
of them exceeded the time limit. The effect is not constant. The advantage falls
with the order and it crosses equal performance between the last two cells, and
Figure~\ref{fig:cruce} shows the per-matrix spread behind each cell. The sequence
decreases at every step, but two of the six steps are of five and ten per cent, so
they lie inside the measurement error of Section~\ref{ssec:design}. What the data
support is the trend and the crossing, and not monotonicity cell by cell. At
$10\times11$ the interquartile range lies entirely below one and the largest
speedup that we observed in that cell is $0.98$, so we have measured the reversal
and we are not extrapolating it.

This experiment alone cannot say whether the decline comes from the order or from
the residual dimension $d=(m-2)(m-1)$, because inside this family the two grow
together. The pairs $(nm,d)$ that the seven cells realize are
$(20,6),(30,12),(42,20),(56,30),(72,42),(90,56),(110,72)$, and the two candidate
abscissae are collinear. To separate them we have to vary $d$ at a fixed order,
and that is the design of the rank sweep below.

\subsection{What drives the cost}
\label{ssec:phases}

Algorithm~\ref{alg:param} has three phases: the first linear solve, the formation
and expansion of $\widetilde{X}A\widetilde{X}-\widetilde{X}$, and the elimination
that finishes it. Table~\ref{tab:phases} times them separately, and this tells us
what would have to change for the method to become faster.

\begin{table}[htbp]
\centering
\caption{Phase decomposition of Algorithm~\ref{alg:param} on the structured
family, medians over ten replications, in seconds. $T_{\mathrm{lin1}}$ is the
first linear solve, $T_{\mathrm{exp}}$ the formation and expansion of
$\widetilde{X}A\widetilde{X}-\widetilde{X}$, $T_{\mathrm{elim}}$ the elimination,
and the built-in column is for scale.}
\label{tab:phases}
\small
\begin{tabular}{lrrrrrr}
\toprule
Order & $d$ & built-in & $T_{\mathrm{lin1}}$ & $T_{\mathrm{exp}}$
      & $T_{\mathrm{elim}}$ & total \\
\midrule
$4\times5$ &  6 &  52.7 &  2.02 &  4.54 & 1.17 &  8.02 \\
$5\times6$ & 12 &  83.8 &  4.93 & 11.81 & 2.33 & 19.38 \\
$6\times7$ & 20 & 110.8 &  7.44 & 17.77 & 3.73 & 27.48 \\
$7\times8$ & 30 & 128.1 & 11.80 & 25.30 & 5.20 & 40.15 \\
$8\times9$ & 42 & 143.2 & 13.60 & 35.50 & 5.94 & 56.22 \\
\bottomrule
\end{tabular}
\end{table}

\begin{remark}[implementation effect]
\label{rem:harnesses}
Tables~\ref{tab:param} and~\ref{tab:phases} use the same algorithm, but they
normalize the residual in different ways: \texttt{simplify} in the first one, and
\texttt{expand} followed by \texttt{normal} in the second one. On fifteen paired
matrices the second one was faster by a median factor $1.54$, and it returned the
same $\pinv{A}$.
\end{remark}

Across the five cells the residual expansion takes a median $55$--$68\%$ of the
total symbolic time, the first solve $25$--$31\%$, and the final elimination only
$8$--$15\%$. We also compared three ways of solving the residual system, a
Gr\"obner basis, a general-purpose solver and coefficient-matrix extraction: no
strategy was consistently the fastest, and in the polynomial cells the final
elimination stayed below $14\%$ of the symbolic total with all three of them. So
the bottleneck is the manipulation of the coefficients, in the first solve and in
the construction of the residual, and not the final elimination.

We turn now to the rank. The experiments so far vary the order and keep the rank
at two, so they cannot separate the two effects. Here we fix the order, the
number of parameters and the entry degree, and vary only the rank: matrices of order $4\times5$ as $A=FG$, with
every entry of $F$ and of $G$ an independent random polynomial of degree one in
two parameters, so that $A$ has generic rank $r$ and entries of degree two, and
$r=4,3,2$, which gives $d=0,2,6$, with ten matrices per cell. The construction
invites a confound, because as $r$ grows each entry becomes a sum of $r$ products.
But the median number of monomials per entry is $6$ in the three cells, the
dimension of the space of bivariate polynomials of degree two, so the support
saturates already at $r=2$. The rank is then the designed factor and the monomial
support stays approximately constant, although the coefficient distributions are
not identical across cells and we do not claim that the rank is literally the only
difference.

\begin{table}[htbp]
\centering
\caption{Order $4\times5$, two parameters, entry degree two; only the rank
varies. Median CPU times in seconds over ten matrices per cell.
Conventions as in Section~\ref{ssec:design}.}
\label{tab:rank}
\small
\begin{tabular}{rrrrrcrl}
\toprule
$r$ & $d$ & built-in & symbolic & speedup & IQR & Wins & $p$ \\
\midrule
4 &  0 &  24.99 & 12.27 & $1.81$ & $[1.57,\,2.68]$ & 10/10 & 0.002 \\
3 &  2 &  13.86 & 19.58 & $0.70$ & $[0.47,\,0.75]$ &  0/10 & 0.002 \\
2 &  6 &   0.64 &  6.42 & $0.10$ & $[0.10,\,0.16]$ &  0/10 & 0.002 \\
\bottomrule
\end{tabular}
\end{table}

Table~\ref{tab:rank} shows a speedup that falls by a factor of seventeen on the
unrounded medians and crosses equal performance between full rank and rank three,
with no exception in the thirty matrices. What drives the change is the baseline:
from $r=4$ to $r=2$ the median built-in time falls from $24.99$ to $0.64$
seconds, while the symbolic times stay in a much narrower range and are not even
monotone in the rank. So rank deficiency benefits the built-in routine much more than it benefits ours. Between
these cells $nm$ is constant while $d$ varies, and the symbolic time varies with
it, so $d$ does carry information that $nm$ does not, although only by a factor of
$1.6$.

Is it the rank or is it the residual dimension? Neither that sweep nor the
parametric series of Section~\ref{ssec:results} separates the rank from $d$,
because at a fixed rank the two move together. We can separate them, because
$d=(n-r)(m-r)$ depends on the order and on the rank only through $m-r$ and $n-r$:
in shape $m\times(m+1)$, if we take $r=m-2$ then $d=6$ for \emph{every} $m$, so we
can raise the order and keep the residual system at six unknowns. At $m=4$ this
prescription returns $r=2$, which is the rank-two cell itself, so the two series
of Table~\ref{tab:dfijo} begin on the same matrices, and that shared cell is a
control on the design and not a separate measurement.

\begin{table}[htbp]
\centering
\caption{Two series over the same orders, $A=FG$ with factors of degree one in
two parameters, so $\delta=2$. In the first $d=6$ is fixed and the rank rises with
the order; in the second the rank is two and $d$ rises. Median CPU times in
seconds over ten matrices per cell, conventions as in
Section~\ref{ssec:design}.}
\label{tab:dfijo}
\small
\begin{tabular}{lrrrrrl}
\toprule
& Order & $r$ & $d$ & built-in & symbolic & speedup \\
\midrule
\multirow{3}{*}{$d=6$ fixed}
 & $4\times5$ &  2 &  6 &   0.9 &    3.7 & $0.228$ \\
 & $5\times6$ &  3 &  6 &  17.2 &  103.8 & $0.170$ \\
 & $6\times7$ &  4 &  6 &  73.9 & 1946.8 & $0.036$ \\
\midrule
\multirow{5}{*}{rank $2$ fixed}
 & $4\times5$ &  2 &  6 &   0.9 &    3.7 & $0.228$ \\
 & $5\times6$ &  2 & 12 &   4.1 &   20.2 & $0.216$ \\
 & $6\times7$ &  2 & 20 &   1.5 &   47.4 & $0.032$ \\
 & $7\times8$ &  2 & 30 &   3.2 &  113.6 & $0.026$ \\
 & $8\times9$ &  2 & 42 &   6.0 &  205.1 & $0.020$ \\
\bottomrule
\end{tabular}
\end{table}

The two series answer the question. Multiplying the residual dimension by seven
multiplies the symbolic time by $55$; raising the rank from two to four with the
residual dimension \emph{fixed at six} multiplies it by $524$. The comparison is
sharpest inside a single order, and we take it at $6\times7$, where both cells
carry ten replications: there the rank-four matrix with $d=6$ takes $1946.8$
seconds, against $47.4$ for the rank-two matrix with $d=20$, a factor of $41$ in
favour of the larger residual system. Neither series
varies one factor at a time, since in the first one the rank rises with the order
and in the second one $d$ does, so this is not a causal separation of order, rank
and $d$. What it shows is that at a fixed order the rank dominates the residual
dimension by a wide margin.

This agrees with Theorem~\ref{thm:bound}, which bounds degrees and not running
times. Its bounds involve the rank and the entry degree, and neither $m$ nor $n$
nor $d$, so along the rank-two series the degrees stay at $3\delta$ and $4\delta$
however large the matrix is, while along the $d=6$ series they grow to $7\delta$
and $8\delta$ at $r=4$. The residual dimension counts the free variables of the
second system, the rank governs the degrees of the rational functions that we
manipulate, and the measurement says that the second effect dominates. We control
the size of the entries here too: the median number of monomials per entry of $A$
is $6$ in the seventy-four replications.

\begin{remark}[the largest cell of the $d=6$ series]
\label{rem:dfijo78}
At $7\times8$ with $r=5$ we measured the $d=6$ series on four matrices and not on
ten, and we stopped the run because the next cell would have needed a rank-six
matrix. The median symbolic time, $9035$ seconds, continues the trend of the
previous cells. But with four replications and a fivefold spread it is too
variable for a quantitative comparison, so we report it here and not in
Table~\ref{tab:dfijo}.
\end{remark}

The last factor is the number of parameters. Every table so far fixes $\ell$ at
two or three. The cost
model of Section~\ref{sec:algorithms} attributes the expense to the arithmetic
of the coefficients in $\Q(u_{1},\dots,u_{\ell})$, so $\ell$ should matter, and
the degree bound of Theorem~\ref{thm:bound} says in what way. We now vary it
alone. We take order $4\times5$, rank two, and $A=FG$ with the entries of $F$ and
$G$ of degree one, so that the entries of $A$ have degree at most $\delta=2$. We
take $\ell=1,2,3$ and five matrices per cell, with $d=6$ in the fifteen
replications, so the system that the second solve handles has the same size
throughout.

\begin{table}[htbp]
\centering
\caption{Order $4\times5$, rank two, factors of degree one and hence $\delta=2$;
only $\ell$ varies, with $d=6$ throughout. Median CPU times in seconds over five
matrices per cell. The monomial columns are median counts per entry, for $A$ and
for the numerator of $\pinv{A}$.}
\label{tab:ell}
\small
\begin{tabular}{rrrrrrrl}
\toprule
$\ell$ & mon $A$ & mon $\pinv{A}$ & built-in & $T_{\mathrm{coef}}$
& $T_{\mathrm{elim}}$ & symbolic & speedup \\
\midrule
1 &  3 &  7 &  0.15 &  0.42 & 0.06 &  0.48 & $0.315$ \\
2 &  6 & 28 &  0.71 &  2.42 & 0.46 &  2.88 & $0.271$ \\
3 & 10 & 84 & 15.22 & 34.78 & 3.88 & 38.85 & $0.392$ \\
\bottomrule
\end{tabular}
\end{table}

The monomial counts are the point of the table. On this family they are not
approximately what Theorem~\ref{thm:bound} allows, they are exactly it. Here $r=2$
and $\delta=2$, so the theorem bounds the numerator of $\pinv{A}$ by total degree
$(2r-1)\delta=6$, and the number of monomials of total degree at most $D$ in
$\ell$ variables is $\binom{D+\ell}{\ell}$, so a dense numerator attaining the
bound would have $7$, $28$ and $84$ terms. Those are the measured counts, with no
deviation in the fifteen replications, and the entries of $A$ realize
$\binom{\delta+\ell}{\ell}=3,6,10$ in the same way. So here the bound is attained,
the numerator is dense, and the closed form has
$\binom{(2r-1)\delta+\ell}{\ell}$ terms per entry. The bound does not involve
$\ell$ at all, and that is precisely the point: the rank fixes the degree, and the
number of parameters fixes how many coefficients a polynomial of that degree can
carry.

With $d$ fixed at six the median symbolic time grows from $0.48$ to $38.85$
seconds between one and three parameters, and the built-in routine slows down by a
comparable factor. The dimension of the second system does not change; what grows
is the size of the coefficients that it manipulates, and the coefficient phase
takes then between $84$ and $90$ per cent of the symbolic total, which reproduces
the decomposition of Section~\ref{ssec:phases} on a different family. Across the
three cells the paired speedup shows no systematic trend, although with five
replications per cell we would not detect a small one. So the number of parameters
makes the problem harder for both methods rather than changing the comparison
between them.

\subsection{The determinantal formula as an algorithm}
\label{ssec:det}

We introduced the determinantal representation of
Proposition~\ref{prop:closedform} in order to obtain the degree bound. But it is
also a way of computing $\pinv{A}$, and in this subsection we time it against the
algorithm of Section~\ref{sec:algorithms} on the same matrices.

A direct evaluation needs $\binom{m}{r}\binom{n}{r}$ minors of order $r$ and, for
each one of them, $r^{2}$ cofactors of order $r-1$. A simple measure of that
combinatorial workload is
\begin{equation}
  \label{eq:blocks}
  \mathcal{B}(m,n,r)\;:=\;\binom{m}{r}\binom{n}{r}\,r^{2},
\end{equation}
and it is not monotone in $r$, because the binomial factors collapse at full rank.
So $\mathcal{B}$ rises, it peaks in the middle of the rank range, and it falls
again. This shape is different from anything that we have measured so far, since
by the rank sweep of Section~\ref{ssec:phases} the cost of the linear route rises
with the rank all the way.

The design sweeps the whole rank range at two orders, $6\times7$ and $8\times9$,
with five matrices per rank, one parameter, and $A=FG$ with the entries of $F$ and
$G$ random of degree one, so that $\delta=2$. The two arms are the elimination of
Algorithm~\ref{alg:param}, with the \texttt{expand}$\,\circ\,$\texttt{normal}
expansion of Remark~\ref{rem:harnesses}, and \eqref{eq:closedentry} evaluated
directly, drawn in random order in every replication. In every replication the two
arms returned the same matrix, checked entry by entry.

\begin{table}[htbp]
\centering
\caption{One parameter, $\delta=2$, generic $A=FG$; medians over five matrices per
cell, in seconds. $\mathcal{B}$ is \eqref{eq:blocks}, in bold at the rank where it
peaks in each block. ``Ratio'' is the median of the per-matrix ratios and
``mon.'' the number of monomials in the numerator of an entry of $\pinv{A}$,
constant within every cell.}
\label{tab:det}
\small
\begin{tabular}{rrrrrrrr}
\toprule
Order & $r$ & $\mathcal{B}$ & $d$ & Algorithm~\ref{alg:param} & \eqref{eq:closedentry}
& Ratio & mon. \\
\midrule
$6\times7$ & 1 &   42 & 30 &  0.38 & 0.001 & 352.0 &  3 \\
           & 2 & 1\,260 & 20 &  3.99 & 0.104 &  37.9 &  7 \\
           & 3 & 6\,300 & 12 & 13.15 & 0.823 &  16.1 & 11 \\
           & 4 & \textbf{8\,400} &  6 & 18.09 & 2.007 & \textbf{7.7} & 15 \\
           & 5 & 3\,150 &  2 & 31.19 & 1.154 &  21.8 & 19 \\
           & 6 &    252 &  0 & 31.24 & 0.089 & 245.2 & 17 \\
\midrule
$8\times9$ & 1 &      72 & 56 &   0.78 &  0.002 & 391.0 &  3 \\
           & 2 &   4\,032 & 42 &  12.35 &  0.389 &  31.2 &  7 \\
           & 3 &  42\,336 & 30 &  79.16 &  8.890 &   8.8 & 11 \\
           & 4 & 141\,120 & 20 & 240.57 & 45.121 &   5.3 & 15 \\
           & 5 & \textbf{176\,400} & 12 & 308.89 & 73.589 & \textbf{4.4} & 19 \\
           & 6 &  84\,672 &  6 & 432.81 & 79.453 &   5.6 & 23 \\
           & 7 &  14\,112 &  2 & 546.23 & 78.776 &   7.3 & 27 \\
           & 8 &     576 &  0 & 465.99 &  4.830 &  74.5 & 23 \\
\bottomrule
\end{tabular}
\end{table}

Three things are visible in Table~\ref{tab:det}. First, the formula is faster at
every rank of both orders. The smallest factor that we observed in the seventy
replications is $2.84$ and the largest one is $1301$. Second, the two arms are different in kind, and not only in size. The time of
Algorithm~\ref{alg:param} increases with the rank over almost the whole range,
from $0.38$ to $31.2$ seconds at $6\times7$ and from $0.78$ to $546$ at
$8\times9$ up to $r=7$, with one single exception at $r=8$, where it falls back to
$466$. The time of the formula, instead, is unimodal, and it peaks at $r=4$ at
$6\times7$ and at $r=6$ at $8\times9$, one rank beyond the peak of
$\mathcal{B}$ at the larger order. So what the cost model locates exactly at both
orders is not the peak of the time, it is the \emph{minimum of the ratio}: the
last column of Table~\ref{tab:det} falls and then rises, with its minimum at
$r=4$ at $6\times7$ and at $r=5$ at $8\times9$, in both cases exactly where
$\mathcal{B}$ is largest. Third, full rank is the case where Algorithm~\ref{alg:param} has the least to do,
since there $d=0$ and it returns at its first solve. And the formula still wins
there by $245$ at $6\times7$ and by $74.5$ at $8\times9$, because $\mathcal{B}$
collapses to $252$ and $576$. So full rank is favourable to both arms at the same
time, and it is the one cell of the table that should not be read as a comparison
under stress.

The last column is an independent check on the parameter sweep of
Section~\ref{ssec:phases}. With $\delta=2$ and $\ell=1$ the count $\binom{(2r-1)\delta+\ell}{\ell}$ that we
established there equals $4r-1$, and that is the measured value at every rank
below the full one, at both orders and without exception, on a family and a
parameter count that that section did not use. So the count depends on the rank
and on the entry degree, and not on the order, which is what
Theorem~\ref{thm:bound} says, and the bound is attained.

At full rank it is not attained, and the shortfall is a definite amount: the bound
gives $4m-1$, that is, $23$ at $6\times7$ and $31$ at $8\times9$, while the
measured values are $17$ and $23$, in both cases $3m-1$. This is the criterion of
Remark~\ref{rem:tight} acting through the generator. At $r=m$ the factor $F$ of
$A=FG$ is square, so $\det F$ divides every maximal minor and hence $c(A)$, and it
has degree $m$ when the factors have degree one, which removes exactly $m$ from
the bound.

So the advantage narrows with the order where $\mathcal{B}$ is large, and
$\mathcal{B}$ grows fast: its maximum over $r$ is $8\,400$ at $6\times7$,
$176\,400$ at $8\times9$ and $3.49\times10^{6}$ at $10\times11$. A last difference
is not about speed at all. The formula needs $r$, while
Algorithm~\ref{alg:param} recovers $d=(m-r)(n-r)$ from its first solve, and in the
parametric setting the rank is a function of the parameters that drops exactly on
the exceptional set. So a caller who has not established the rank cannot use
\eqref{eq:closedentry} directly, and Corollary~\ref{thm:excep} says which
specializations to exclude. In that respect the two routes are complementary.

Two limits remain, and we state them. These are two orders, one parameter, one
entry degree, one generator and five replications per cell, so the design
establishes that the formula is competitive at these orders and not how
competitive it is in general. And it does not establish that $\mathcal{B}$
\emph{governs} the cost, because an implementation can reuse minors across pairs
$(I,J)$ and the cost of each determinant depends on the coefficients too. What the
measurements support is that $\mathcal{B}$ captures the combinatorial shape of the
cost and locates the minimum of the ratio exactly.

Three regimes come out of all this. With numeric entries the built-in routine is
the right tool. With parametric entries both routes give a closed form, and which
one is faster depends on the size of the entries and on the family, and not on the
rank alone. And when the rank is known, \eqref{eq:closedentry} is worth trying,
computing $\mathcal{B}(m,n,r)$ first as an indicator of the workload.

\begin{remark}[obtaining the closed form and evaluating it are different costs]
\label{rem:evaluar}
Nothing above says that the closed form is cheap to \emph{use}. Specializing it at
a parameter value, instead of solving again numerically there, is cheaper by
factors of $3.80$, $3.04$, $1.73$ and $1.23$ at orders $4\times5$ to $7\times8$
with one parameter, so the saving is gone by the last of them, and with two
parameters at $4\times5$ it reverses and specializing costs about ten times more.
The mechanism is Theorem~\ref{thm:bound}: the closed form carries
$\binom{(2r-1)\delta+\ell}{\ell}$ terms per entry while $A$ stays small. So the
case for computing $\pinv{A}$ symbolically does not rest on repeated evaluation.
\end{remark}

\section{Application to a parametric Leontief model}
\label{sec:application}

In the Leontief input--output model \cite{Leontief1951} an economy of $n$ sectors is
described by
\begin{equation}
  \label{eq:leontief}
  x = Ax + f, \qquad\text{equivalently}\qquad (I-A)x = f,
\end{equation}
where $x\in\R^{n}$ is the vector of total sectoral output, $f\in\R^{n}$ is final
demand, and $A=(a_{ij})$ is the matrix of technical coefficients, so that $a_{ij}$
is the amount of the output of sector $i$ that we need in order to produce one
unit of the output of sector $j$. The model is \emph{productive} when
$(I-A)^{-1}$ exists and is non-negative, and by the Hawkins--Simon condition
\cite{HawkinsSimon1949} this happens in particular when every column sum of $A$ is
smaller than one.

Technical coefficients are estimated quantities. If we treat one of them as a
parameter, \eqref{eq:leontief} becomes a system over $\Rat$, and then
Theorem~\ref{thm:specialization} says that we can obtain every nonexceptional
specialization from one single symbolic computation. We take a three-sector
economy with
\begin{equation}
  \label{eq:Aexample}
  A(u)=
  \begin{pmatrix}
    \tfrac{1}{5}  & \tfrac{3}{20}          & \tfrac{1}{20}\\[2pt]
    \tfrac{3}{10} & \tfrac{7}{20}(1+u)     & \tfrac{1}{5}\\[2pt]
    \tfrac{1}{10} & \tfrac{1}{5}           & \tfrac{1}{4}
  \end{pmatrix},
\end{equation}
where the own-input coefficient of the second sector carries a proportional shock
$u$, so that $u=0$ gives the unperturbed economy. At $u=0$ the column sums are
$3/5$, $7/10$ and $1/2$, so the Hawkins--Simon condition holds and the economy is
productive. We fix final demand at $f=(100,150,120)^{\mathsf T}$.

We apply Algorithm~\ref{alg:param} to $I-A(u)$ and we obtain
$\bigl(I-A(u)\bigr)^{\dagger}$ over $\Q(u)$ in one single computation, and with
it $x(u)$ in closed form:
\begin{equation}
  \label{eq:Xclosed}
  x(u)=\frac{1}{7\,(119u-180)}
  \begin{pmatrix}
    100\,(1134u-2825)\\[2pt]
    -539000\\[2pt]
    200\,(742u-1915)
  \end{pmatrix},
\end{equation}
a vector of rational functions of $u$, as Theorem~\ref{thm:mpfield} predicts.
Specializing at $u=0$ gives
\[
  x(0)=\Bigl(\tfrac{14125}{63},\ \tfrac{3850}{9},\
  \tfrac{19150}{63}\Bigr)^{\mathsf T}
  \approx(224.206,\ 427.778,\ 303.968)^{\mathsf T}.
\]
And since we have $x(u)$ in closed form, we obtain the sensitivity of the output
to the coefficient by differentiating, and not by solving the system again on a
grid:
\[
  \frac{\partial x}{\partial u}\Big|_{u=0}
  =\Bigl(\tfrac{18865}{324},\ \tfrac{45815}{162},\
  \tfrac{13475}{162}\Bigr)^{\mathsf T}
  \approx(58.225,\ 282.809,\ 83.179)^{\mathsf T},
\]
\[
  \frac{\partial \log x_{i}}{\partial u}\Big|_{u=0}
  =\Bigl(\tfrac{26411}{101700},\ \tfrac{119}{180},\
  \tfrac{3773}{13788}\Bigr)
  \approx(0.2597,\ 0.6611,\ 0.2736).
\]
The second sector is the most responsive to a shock in its own input requirement,
as we expected. These are exact rational numbers and not finite-difference
approximations, so there is no step size to choose and no truncation error.

We compute the determinant symbolically,
\begin{equation}
  \label{eq:Delta}
  \Delta(u)=\det\bigl(I-A(u)\bigr)
  =\frac{63}{200}-\frac{833}{4000}\,u
  =-\frac{7\,(119u-180)}{4000},
\end{equation}
and its only root is
\[
  u^{*}=\frac{180}{119}\approx 1.5126,
  \qquad\text{at which}\qquad
  a_{22}(u^{*})=\frac{299}{340}\approx 0.8794 .
\]
At $u=u^{*}$ the rank of $I-A(u)$ drops to $2$. The first two leading principal
minors are still positive there, $4/5$ and $7/136$, and only the determinant
vanishes. So $u^{*}$ is exactly the point where the Hawkins--Simon condition fails
for the first time, that is, the viability frontier of this parametrized economy.
A vanishing determinant does not leave \eqref{eq:leontief} unsolvable by itself,
only without a unique solution. Here the system is in fact inconsistent, because
\[
  \rank\bigl(I-A(u^{*})\bigr)=2
  \qquad\text{while}\qquad
  \rank\bigl(\left[\,I-A(u^{*})\mid f\,\right]\bigr)=3 ,
\]
so no output vector meets that demand exactly, and
$\bigl(I-A(u^{*})\bigr)^{\dagger}f$ is the minimum-norm least-squares solution.
Among the admissible parameter values, $u^{*}$ is exactly the exceptional set
$\mathcal{E}(I-A)$ of Definition~\ref{def:exceptional}, and we can see it in
\eqref{eq:Xclosed} as a pole of the three components: the closed form carries its
own degeneracy locus with it.

We do not need to find it that way, and this is where Corollary~\ref{thm:excep}
pays. The matrix $I-A(u)$ has rank three, so its only maximal minor is $\Delta(u)$
and $S(I-A)=\Delta(u)^{2}$, and the corollary gives
$\mathcal{E}(I-A)\cap\R=\{\Delta=0\}=\{u^{*}\}$. So we read the viability
frontier from one determinant of $A$, before forming any pseudoinverse, and in an
economy of $n$ sectors with several parametrized coefficients that is the
difference between a determinant and a symbolic inversion. A finite grid of
scenarios need not contain $u^{*}$, and unless we choose it so, it shows the
solver becoming ill-conditioned nearby without returning the value itself, and
without a criterion to separate genuine degeneracy from floating-point error.

This example does not exercise Theorem~\ref{thm:residuallinear}, though. Over
$\R(u)$ the matrix $I-A(u)$ has generic rank three, so $d=0$, the pseudoinverse is
the inverse, and Algorithm~\ref{alg:param} returns at its early exit without
forming any residual system. The residual only becomes relevant if we specialize
at $u=u^{*}$ first and compute the pseudoinverse of the resulting rank-two matrix,
where $d=1$ and the residual system is one single affine-linear equation, as the
theorem says in advance.

\begin{remark}
\label{ssec:minnorm}
The minimum-norm selection is a mathematical criterion and not an economic
identification argument. It singles out one element of the solution set of the
degenerate system, and it does not assert that an economy would realize that
element. We offer the example as an illustration of the algebra and not as an
empirical claim.
\end{remark}

\section{Conclusions and future work}
\label{sec:conclusions}

We have shown that, over a Moore--Penrose field, the Penrose system reduces to two
affine-linear solves: on $A\{1,3,4\}$ the outer residual is
$XAX-X=\pinv{A}-X$. This gives a unique linear residual system and a maximal residual
ideal, while the complex case is handled by an equivalent real splitting.

For polynomial matrices the determinantal representation yields
$\deg\num(\pinv{A})\le(2r-1)\delta$ and
$\deg\den(\pinv{A})\le2r\delta$. More importantly, its intrinsic common denominator
$S(A)$ determines the exceptional set directly from $A$. That the zero set of $S(A)$
is the rank-drop locus is elementary; what Corollary~\ref{thm:excep} adds is that
specialization fails at exactly those points, so the locus where the parametric
closed form ceases to represent the specialized pseudoinverse is known before that
pseudoinverse is computed.

The experiments show that parametric cost is dominated by coefficient manipulation,
not by the final residual solve, and that performance depends strongly on rank and on
the matrix family. The determinantal formula outperformed Algorithm~\ref{alg:param}
at every rank of the two tested orders, although its broader regime remains open.
The Leontief example illustrates the main benefit of symbolic computation: exact
sensitivities and an exact viability frontier rather than uniform speed.

Natural next steps are to construct $A\{1,3,4\}$ directly from kernel bases, combine
the degree bounds with interpolation at larger orders, compare the determinantal and
linear routes systematically, and extend the application to rectangular supply--use
systems.


\section*{CRediT authorship contribution statement}

\textbf{J. Sendra:} Conceptualization, Methodology, Formal analysis,
Investigation, Data curation, Software, Validation, Visualization,
Writing --- original draft, Writing --- review and editing.

\section*{Acknowledgements}

Part of this work builds on the final-year project of M.\ Coronado at the Escuela
Polit\'ecnica Superior of CUNEF Universidad, supervised by J.~R.\ Sendra.
Theorem~\ref{thm:complex}, the real splitting of the hermitian conditions, is hers,
as are the first Maple implementations of
Algorithms~\ref{alg:real}--\ref{alg:param} and the measurements that
Section~\ref{ssec:results} reproduces and Section~\ref{ssec:phases} reanalyses. I am
grateful to her for that work and for agreeing that it be developed and published
here.

The theory of Moore--Penrose fields on which this paper rests, and the formulation of
the Penrose system it takes up, originate in earlier joint work of J.~R.\ Sendra and
the present author \cite{SendraSendra2017}, continuing the line of research begun in
\cite{SendraSendra2015,SendraSendra2016}. J.~R.\ Sendra also supervised the work in
which the Maple implementations used here were first developed, and I thank him for
that guidance and for valuable discussions on this material.

\section*{Declaration of competing interest}

The author declares that she has no known competing financial interests or personal
relationships that could have appeared to influence the work reported in this
paper.

\section*{Data availability}

All data are generated by the code accompanying this paper; none is third-party.
The Maple harnesses, the raw per-replication timings, the scripts that produce
every table and figure, and the scripts that verify the examples of
Section~\ref{sec:theory}, the closed form of
Proposition~\ref{prop:closedform} over $\Q(i)$ and every number of
Table~\ref{tab:det} are deposited at
\url{https://doi.org/10.5281/zenodo.23164326}, together with a note recording which
harness and which data file produce each table. Replications that exceeded the time limit are recorded rather than discarded,
including the four of Section~\ref{ssec:design}. The parametric cells of
Section~\ref{ssec:results} reproduce, and the phase decomposition of
Section~\ref{ssec:phases} reanalyses, measurements first obtained by M.\ Coronado in
her final-year project, used here with her permission; those raw timings are included
in the deposit.

\section*{Declaration of generative AI and AI-assisted technologies in the
writing process}

During the preparation of this work the author used Claude (Anthropic) to assist with
drafting and language editing of the text, with \LaTeX\ formatting, and as a checking
aid while revising the manuscript and verifying the bibliographic details of the cited
sources against the original publications. The mathematical content is the author's own: the
statements, proofs, algorithms and bounds, the design of the experiments and the
choice of which further experiments to run, the interpretation of the measurements and
the conclusions drawn from them. Every computation reported here was performed and
checked by the author. After using this tool the author reviewed and edited the content as
needed and takes full responsibility for the content of the publication.

\ifelsevier
  \bibliographystyle{elsarticle-num}
\else
  \bibliographystyle{plain}
\fi
\ifelsevier\else\small\setlength{\itemsep}{0pt}\fi

\end{document}